\numberwithin{equation}{section}
\newtheorem{theorem}{Theorem}[section]
\newtheorem{lemma}[theorem]{Lemma}
\newtheorem{remark}[theorem]{Remark}
\newtheorem{definition}[theorem]{Definition}
\newtheorem{proposition}[theorem]{Proposition}
\numberwithin{figure}{section}
\title{Asymptotics for multiple $q$-orthogonal polynomials from the Riemann Hilbert Problem}
\author{Tomas Lasic Latimer}
\address{University of California Santa Cruz}
\email{tlasicla@ucsc.edu}
\date{}
\begin{document}
\begin{abstract}
    We deduce the asymptotic behaviour of a broad class of multiple $q$-orthogonal polynomials as their degree tends to infinity.  We achieve this by rephrasing multiple $q$-orthogonal polynomials as part of a solution to a Riemann Hilbert Problem (RHP). In particular, we study multiple $q$-orthogonal polynomials of the first kind (see \cite{postelmans2005multiple}), which are Type II orthogonal polynomials with two orthogonality conditions given by Equation \eqref{the general weights}. Using $q$-calculus we obtain detailed asymptotics for these polynomials from the RHP. This class of polynomials was studied in part due to their connection to the work of \cites{POSTELMANS2007119}{postelmans2005multiple}, concerning the irrationality of $\zeta_q(1)$ and $\zeta_q(2)$. \\
    
    \noindent \textbf{Keywords:} q-difference equation, Riemann-Hilbert Problem, Asymptotic analysis, Orthogonal polynomials.
\end{abstract}

\maketitle

\tableofcontents

\section{Introduction}
\subsection{Motivation}
The motivation of this paper is twofold. First, we wish to investigate how the theory presented in \cite{NJTLconst}, for $q$-orthogonal polynomials with a single orthogonality condition, extends to multiple $q$-orthogonal polynomials. In \cite{NJTLconst} we were able to determine the strong asymptotics for $q$-orthogonal polynomials using the RHP and $q$-calculus. As far as we know, these results are the most accurate description of the asymptotics of $q$-orthogonal polynomials found in the literature. Interestingly, in \cite{NJTLconst} we observed that if polynomials $P^{(1)}_n(x)$ and $P^{(2)}_n(x)$ were such that their weights satisfied
\[ \lim_{ x\to 0} \frac{w^{(1)}(x)}{w^{(2)}(x)} = 1,\]
then $P^{(1)}_n(x)$ and $P^{(2)}_n(x)$ had the same asymptotic behaviour (to second order) as $n\to\infty$ (there are some additional minor details to consider for which the reader can refer to \cite{NJTLconst}). Note that in the language of \cite{baik2007discrete}, $0$ is an accumulation point of the $q$-lattice. In the present paper, we successfully extend the theory presented in \cite{NJTLconst} to the case of multiple $q$-orthogonal polynomials and once again find more accurate asymptotics than previously known in the literature (see \cite{postelmans2005multiple}). Furthermore, we observe that in the case of multiple orthogonal polynomials it is no longer sufficient to conclude that if two different sets of weights have similar behaviour at $x=0$, then the second order asymptotics of the corresponding multiple orthogonal polynomials agree as $n\to\infty$. This phenomenon is discussed further in the conclusion of the paper.\\

We also wished to study the asymptotics of multiple $q$-orthogonal polynomials due to their appearance in analysing the $q$-zeta function \cite{POSTELMANS2007119}. In particular, a more accurate estimation of the behaviour as $n\to \infty$ of the multiple $q$-orthogonal polynomials studied in \cite{POSTELMANS2007119}, would provide better bounds on the measure of irrationality of 1, $\zeta_q(1)$ and $\zeta_q(2)$. The desire to apply our results to \cite{POSTELMANS2007119} motivated the choice of polynomials studied. This is why we have chosen to investigate the asymptotics of multiple $q$-orthogonal polynomials of the first kind (see Remark \ref{why first?}), with weights given by
\begin{subequations}\label{the general weights}
    \begin{eqnarray}
    w_1(x) = x^\alpha \omega(x)d_qx,\\
    w_2(x) = x^\beta \omega(x)d_qx,
\end{eqnarray}
\end{subequations}
which satisfy the constraint
\begin{equation}\label{the weight constraint}
    |\omega(q^{2n})-1| = \mathcal{O}(q^{2n}), 
\end{equation}
as $n\to \infty$. In order to guarantee the existence of such polynomials we also require that $(w_1(x)d_qx,w_2(x)d_qx)$ is an AT system (see \cite[Chapter 23]{ismail2005classical}), note that this implies that $\alpha - \beta \notin \mathbb{Z}$. Moreover, to conduct our asymptotic analysis we will require the following additional restrictions on $w_1(x)$ and $w_2(x)$: $\alpha \notin \mathbb{Z}$, $\beta \notin \mathbb{Z}$. This additional restriction is necessary for the asymptotic analysis conducted in this paper. However, the condition $\alpha \notin \mathbb{Z}$, $\beta \notin \mathbb{Z}$ is not needed for the existence of multiple $q$-orthogonal polynomials and their statement as solutions to a RHP. The restriction that $\alpha \notin \mathbb{Z}$ and $\beta \notin \mathbb{Z}$ allows us to construct the functions $h_\alpha(z)$ and $h_\beta (z)$ (see Lemma \ref{h lemma}) used to carry out our asymptotic analysis. The existence of $h_\alpha(z)$ and $h_\beta(z)$ can be seen as a consequence of the existence of a solution to the RHP that is analytic at $z=0$, which is critical to our asymptotic analysis. \\

Note that the constraint given by Equation \eqref{the weight constraint} is fundamental to our main results. It describes the behaviour of the weight near the accumulation point $z=0$ of the $q$ lattice. Our main theorems essentially state that if two weights are `close' enough (where `close' is defined by Equation \eqref{the weight constraint}) then the corresponding multiple orthogonal polynomials share similar asymptotic behaviour as their degree tends to infinity. If Equation \eqref{the weight constraint} is not satisfied, then following the RHP transformations presented in this paper, we do not determine a RHP with jump close to the identity. Thus, we do not find the same asymptotic behaviour. This point is also discussed in the conclusion of this paper.\\

Further work is needed to use the results presented here, see Theorem \ref{Main theorem} for our main results, to gain a better understanding of the measure of irrationality of 1, $\zeta_q(1)$ and $\zeta_q(2)$.

\begin{remark}\label{why first?}
    Two different types of multiple little $q$-Jacobi polynomials were introduced in  \cite{postelmans2005multiple}, both of which are type II multiple orthogonal polynomials \cite{WVA99}. These were multiple little $q$-Jacobi polynomials of the first kind, with weights $w(x;a_j,b|q)d_qx$ and, multiple little $q$-Jacobi polynomials of the second kind, with weights $w(x;a,b_j|q)d_qx$, where 
    \[ w(x;a,b|q) = x^a\frac{(qx;q)_\infty}{(q^{b+1}x;q)_\infty}.\]
    Note that the weights we study in this paper are generalisations of multiple little $q$-Jacobi polynomials of the first kind. The asymptotic results we find do not translate readily to multiple little $q$-Jacobi polynomials of the second kind, which will need a different approach. In \cite{POSTELMANS2007119} they use the limit of multiple little $q$-Jacobi polynomials of the first kind, as $a_1\to a_2$, as the beginning of their analysis of the rationality of $\zeta_q(1)$ and $\zeta_q(2)$.
\end{remark}

The author wishes to extend special thanks to Prof. Walter Van Assche, who motivated this study and provided valuable discussion.
\subsection{Notation}\label{notation}
For completeness, we recall some well known definitions and notation from the calculus of $q$-differences. These definitions can be found in \cite{Ernst2012}. Throughout the paper we will assume $q \in \mathbb{R}$ and $0<q<1$. We also recall some nomenclature from multiple orthogonal polynomial theory.

\begin{definition}
We define the Pochhammer symbol $(a;q)_{n}$, and un-normalised Jackson integrals as follows.
\begin{enumerate}

\item The Pochhammer symbol $(a;q)_{n}$ is defined as
\begin{equation*}
    (a;q)_{n} = \prod_{j=0}^{n-1}(1-aq^{j}) \,.
\end{equation*}
It is convention that $(a;q)_0:=1$.

\item The un-normalised Jackson integral from 0 to 1 is defined as

\begin{equation}\label{Jackson}
    \int_{0}^{1}f(x)d_{q}x = \sum_{k=0}^{\infty} f(q^{k})q^{k} \,.
\end{equation}
We will use the un-normalised Jackson integral to define the orthogonality conditions satisfied by multiple $q$-orthogonal polynomials in this paper.
\end{enumerate}
\end{definition}

We recall a slightly modified definition of an \textit{appropriate} curve $\Gamma$ to that given in \cite[Defintion 1.2]{qRHP}. 

\begin{definition}\label{admissable}
A positively oriented Jordan curve $\Gamma$ in $\mathbb C$ with interior $\mathcal D_-\subset\mathbb C$ and exterior $\mathcal D_+\subset\mathbb C$ is called {\em appropriate} if 
\[
q^k\in \begin{cases}
&\mathcal D_- \quad {\rm if}\, k\ge 0,\\
&\mathcal D_+ \quad {\rm if}\, k< 0.
\end{cases}
\]
An example of an {\em appropriate} curve is illustrated in Figure \ref{RHP-figure}.
\end{definition}

\begin{definition}\label{scaled curve}
 We define $\Gamma_\lambda$ $(\lambda \neq 0)$ as the curve $\Gamma$ scaled such that the modulus of the points on it are multiplied by $
\lambda$ $($i.e. if $\Gamma$ were the unit circle, $\Gamma_{\lambda}$ would be a circle with radius $\lambda$$)$.   
\end{definition}

\begin{definition}\label{multiple ortho def}
    We define multiple $q$-orthogonal polynomials as the set of monic polynomials $\{ P_{n,m}(x)\}_{n,m=0}^\infty$, 
which satisfy the orthogonality relation
\begin{subequations}\label{multiple q ortho def}
\begin{eqnarray}
\int_{0}^1 P_{n,m}(x)x^k\mu_1(x) d_qx &=& 0,\,\mathrm{for}\,k<n ,\\
\int_{0}^1 P_{n,m}(x)x^j\mu_2(x) d_qx &=& 0,\,\mathrm{for}\,j<m .
\end{eqnarray}
\end{subequations}
Note that if $(\mu_1(x)d_qx,\mu_2(x)d_qx)$ is an AT system (see \cite[Chapter 23]{ismail2005classical}) then $P_{n,m}(x)$ exists and is of degree $n+m$.
\end{definition}

\begin{definition}\label{L2 def}
We define the constants $\gamma_1^{(n-1,n)}$, $\gamma_2^{(n,n-1)}$ and $\gamma_1^{(n,n)}$ by the equations:
\begin{subequations}\label{gamma def}
    \begin{eqnarray}
        \gamma_1^{(n-1,n)} &=& \int_{0}^1 P_{n-1,n}(x)x^{n-1} \mu_{1}(x) d_qx,\\
        \gamma_2^{(n,n-1)} &=& \int_{0}^1 P_{n,n-1}(x)x^{n-1} \mu_{2}(x) d_qx,\\
        \gamma_1^{(n,n)} &=& \int_{0}^1 P_{n,n}(x)x^n \mu_{1}(x) d_qx,
    \end{eqnarray}
\end{subequations}
where $\{P_{n,m}(x)\}_{n,m=0}^\infty$ is the set of multiple $q$-orthogonal polynomials defined in Definition \ref{multiple ortho def}.
\end{definition}

\subsection{Main Results}
The main results of this paper are collected in the theorem below. This theorem is proved in Section \ref{Uni section}.
\begin{theorem}\label{Main theorem}
Given the set of multiple $q$-orthogonal polynomials $($see Definition \ref{multiple ortho def}$)$ with weights $\mu_1(x)=w_1(x)$ and $\mu_2(x)=w_2(x)$, where $w_1(x)$ and $w_2(x)$ are defined in Equation \eqref{the general weights} and $\omega(x)$ satisfies Equation \eqref{the weight constraint}, then the following identities hold:
      \begin{enumerate}
    \item $\lim_{n\to\infty} q^{n(1-2n)}P_{n,n}(q^{2n}z) = F_1(z)$, where 
     \[F_1(z)=\frac{1}{\Omega^{(1)}_\infty}\sum_{j=0}^\infty \frac{(-1)^jq^{j(j+1)/2}}{(q^{\alpha+1};q)_j(q^{\beta+1};q)_j(q;q)_j}z^j,\]
     is an entire function and $\Omega^{(1)}_\infty$ is a non-zero constant defined in Equation \eqref{omega 1 def}. Furthermore, convergence is uniform in any disc $|z|<R$.
        \item For $k\in\mathbb{N}$ larger than some critical value $k_c$ (which is independent of $n$) it holds that
        \begin{multline}\nonumber
            P_{n,n}(q^{2n-2}q^{-k})(q^{2n+1}q^{-k};q)_\infty = \\ \mathcal{O}\Big(\max\big(q^{n}P_{n,n}(q^{-1}q^{2n-k}), q^{k}P_{n,n}(q^{-1}q^{2n-k}), q^{2n}P_{n,n}(q^{2n-k}),q^{k}P_{n,n}(q^{2n-k})\big)\Big).
        \end{multline}

        \item $\lim_{n\to\infty} q^{n(3-2n)}P_{n,n-1}(q^{2n}z) = F_2(z)$, where $F_2(z)=\lambda_1F_1(qz)$, and $\lambda_1$ is a non-zero constant. Furthermore, convergence is uniform in any disc $|z|<R$.
        \item For $k\in\mathbb{N}$ larger than some critical value $k_c$ (which is independent of $n$) it holds that
        \begin{multline}\nonumber 
            P_{n,n-1}(q^{2n-2}q^{-k})(q^{2n+1}q^{-k};q)_\infty = \\ \mathcal{O}\Big(\max\big(q^{n}P_{n,n-1}(q^{-1}q^{2n-k}), q^{k}P_{n,n-1}(q^{-1}q^{2n-k}), q^{2n}P_{n,n-1}(q^{2n-k}),q^{k}P_{n,n}(q^{2n-k})\big)\Big).
        \end{multline}

        \item $\lim_{n\to\infty} q^{n(3-2n)}P_{n-1,n}(q^{2n}z) = \lambda_3 F_2(z)$, for some non-zero constant $\lambda_3$. Furthermore, convergence is uniform in any disc $|z|<R$. Note that up to linear scaling both $P_{n,n-1}(zq^{2n})$ and $P_{n-1,n}(zq^{2n})$ approach the same function $F_2(z)$. 
       \item For $k\in\mathbb{N}$ larger than some critical value $k_c$ (which is independent of $n$) it holds that
        \begin{multline}\nonumber
            P_{n-1,n}(q^{2n-2}q^{-k})(q^{2n+1}q^{-k};q)_\infty =  \\\mathcal{O}\Big(\max\big(q^{n}P_{n,n-1}(q^{-1}q^{2n-k}), q^{k}P_{n,n-1}(q^{-1}q^{2n-k}), q^{2n}P_{n,n-1}(q^{2n-k}),q^{k}P_{n,n}(q^{2n-k})\big)\Big).
        \end{multline}
 
    \end{enumerate}
Furthermore, the norms $\gamma_1^{(n,n)}$, $\gamma_1^{(n-1,n)}$ and $\gamma_2^{(n,n-1)}$, defined in Equation \eqref{gamma def} display the following asymptotic behaviour as $n\to\infty$.
    \begin{enumerate}
    \item $\lim_{n\to\infty}q^{-n(3n+\alpha+\beta)}\gamma_1^{(n,n)} = C_1$, where $C_1$ is some non-zero constant independent of $n$.
    \item $\lim_{n\to\infty}q^{-n(3n+\alpha+\beta-3)}\gamma_1^{(n-1,n)} = C_2$, where $C_2$ is some non-zero constant independent of $n$.
    \item $\lim_{n\to\infty}q^{-n(3n+\alpha+\beta-3)}\gamma_2^{(n,n-1)} = C_3$, where $C_3$ is some non-zero constant independent of $n$.
    \end{enumerate}
    The constants $C_i$ can be deduced from the arguments presented in Section \ref{Sect 4}. They are the ratio of functions which can be written as converging power series.
\end{theorem}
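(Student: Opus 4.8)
The plan is to realise $P_{n,n}$, $P_{n-1,n}$, $P_{n,n-1}$ together with the norms $\gamma_i$ as entries of the solution $Y$ of the standard $3\times3$ matrix Riemann--Hilbert problem attached to Type II multiple orthogonal polynomials with the two weights $w_1,w_2$: $Y$ is analytic off the appropriate curve $\Gamma$, its first column is $\bigl(P_{n,n}(z),\,c_{1,n}P_{n-1,n}(z),\,c_{2,n}P_{n,n-1}(z)\bigr)^{\mathsf T}$ with the remaining two columns the Jackson transforms, $Y$ jumps across $\Gamma$ by the matrix carrying $w_1,w_2$ in the first row, $Y(z)\,z^{-\operatorname{diag}(2n,-n,-n)}\to I$ as $z\to\infty$, and $Y$ has the prescribed behaviour at the nodes $q^k$. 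The three norms, and the constants $c_{1,n},c_{2,n}$, are recovered from the large-$z$ expansion $Y(z)=\bigl(I+Y_1/z+\cdots\bigr)z^{\operatorname{diag}(2n,-n,-n)}$. Thus every item of the theorem becomes a statement about $Y$.

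I would then apply the sequence of explicit transformations carried out in Section~\ref{Sect 4}: a renormalisation, the further conjugations/deformations adapted to the $q$-lattice, and the rescaling $z\mapsto q^{2n}z$ that magnifies the accumulation point $z=0$. The global parametrix is built from the functions $h_\alpha,h_\beta$ of Lemma~\ref{h lemma} (available precisely because $\alpha,\beta\notin\mathbb Z$), which absorb the $z^\alpha,z^\beta$ factors and leave an RHP analytic at $z=0$. The weight constraint~\eqref{the weight constraint} is exactly what forces the jump of the resulting matrix $R$ to equal $I+\mathcal O(q^{2n})$ uniformly on its contour; hence for $n$ large $R$ exists and $R(z)=I+\mathcal O(q^{2n})$ uniformly on compact sets away from the contour, with a full expansion in powers of $q^{2n}$ if needed. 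I expect this to be the main obstacle: one must verify that the $h_\alpha,h_\beta$-parametrix reproduces the local structure at $z=0$ to the required order, and that the solvability/small-norm argument goes through on the essentially discrete $q$-lattice contour --- this is precisely where the $q$-calculus of the earlier sections replaces classical steepest descent.

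Unravelling the transformations then writes $q^{n(1-2n)}P_{n,n}(q^{2n}z)$ as a fixed entry of the product (parametrix)$\cdot R$, the scaling factor being accounted for by the exponents accumulated in the renormalisation and rescaling; since $R\to I$ uniformly, the limit is the corresponding entry of the parametrix, an entire function $F_1(z)$. To identify $F_1$ with the displayed series I would use that $P_{n,n}$ satisfies a $q$-difference/recurrence relation, pass to the rescaled variable, let $n\to\infty$ to obtain a limiting $q$-difference equation, and note that its unique solution analytic at $z=0$, normalised by $\Omega^{(1)}_\infty$ (read off from the parametrix at $z=0$ via~\eqref{omega 1 def}), is the stated $q$-hypergeometric series; the series is entire because the factor $q^{j(j+1)/2}$ beats the bounded Pochhammer denominators, which together with the uniform $\mathcal O(q^{2n})$ error gives uniform convergence on every disc $|z|<R$. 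The same unravelling of the second and third rows gives items (3) and (5); the limit is $F_2=\lambda_1F_1(qz)$ (and a constant multiple thereof for $P_{n-1,n}$) because passing to the neighbouring multi-indices $(n-1,n)$ and $(n,n-1)$ shifts the rescaled variable by $q$ in the parametrix while changing only a scalar prefactor, which supplies $\lambda_1$ and $\lambda_3$.

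For the off-lattice estimates (2), (4) and (6), at a node $q^{2n-2}q^{-k}$ lying just outside the region where the parametrix controls the polynomial, I would invoke the exact jump relation of the transformed RHP --- equivalently the $q$-difference relation --- to express $P_{n,n}(q^{2n-2}q^{-k})$ times the weight factor $(q^{2n+1}q^{-k};q)_\infty$ as a linear combination of the values at the $q$-neighbouring nodes $q^{2n-k}$ and $q^{-1}q^{2n-k}$ with coefficients of sizes $q^{n}$, $q^{k}$ and $q^{2n}$; bounding term by term yields the stated $\mathcal O\bigl(\max(\cdots)\bigr)$, valid once $k>k_c$, where $k_c$ merely marks the edge of validity of the parametrix and is independent of $n$ after rescaling. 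Finally, for the norms, I would track $Y_1$ through all the transformations and the rescaling; since $R=I+\mathcal O(q^{2n})$, to leading order $Y_1$ equals the corresponding parametrix coefficient times the power of $q$ bookkept from the renormalisation and rescaling --- $q^{n(3n+\alpha+\beta)}$ for $\gamma_1^{(n,n)}$ and $q^{n(3n+\alpha+\beta-3)}$ for $\gamma_1^{(n-1,n)}$ and $\gamma_2^{(n,n-1)}$ --- and the leading constant $C_i$ is a ratio of values of the convergent power series $F_1,h_\alpha,h_\beta$ and of $\Omega^{(1)}_\infty$, hence non-zero. Assembling the four steps proves the theorem.
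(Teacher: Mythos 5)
Your outline reproduces the paper's starting point (the $3\times3$ RHP whose first column carries $P_{n,n},P_{n-1,n},P_{n,n-1}$ and whose large-$z$ coefficient encodes the norms) and its final step (a small-norm comparison made possible by the constraint \eqref{the weight constraint}), but it leaves a genuine hole in the middle. Everything in your plan funnels through a ``global parametrix built from $h_\alpha,h_\beta$'' whose value at $z=0$ you would use to read off $\Omega^{(1)}_\infty$ and whose $1/t$-coefficient you would use for the constants $C_i$. No such explicit parametrix is constructed in the paper, and none is available in closed form: the functions $h_\alpha,h_\beta$ of Lemma \ref{h lemma} only enter the jump matrix, and the object playing the role of your parametrix is the actual solution $W_n$ for the model weights $x^\alpha(qx;q)_\infty$, $x^\beta(qx;q)_\infty$, whose asymptotics are precisely the main content of Sections \ref{Sect 3}--\ref{Sect 4} and are obtained from the Lax-pair $q$-difference equation, not from a steepest-descent/parametrix argument. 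Concretely, the limiting $q$-difference equation determines $P_{n,n}(q^{2n}t)(q^{2n+1}t;q)_\infty$ only up to the scalar $P_{n,n}(0)$ (the normalised solution \eqref{finding F1}); the substantive analytic work is the connection problem that fixes this scalar, namely dividing by $g(t)=(qt;q)_\infty(t^{-1};q)_\infty$, showing via the matrix Pochhammer product \eqref{matrix poch} that $v_n=u_n/g$ tends to a nonzero constant, and matching at the scale $t\sim q^{-2n}$ where monicity applies (Equation \eqref{eq vnqn}), which is what produces \eqref{omega 1 def} and \eqref{final Pn0}; likewise $C_1$ comes from the transfer function $\Lambda_n(z)$ and Equation \eqref{C1 def}, not from a parametrix coefficient. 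As written, ``read off $\Omega^{(1)}_\infty$ from the parametrix'' is circular: constructing that object is equivalent to solving the model asymptotic problem you are trying to avoid. The small-norm step with jump $I+\mathcal O(q^{2n})$ is indeed how the paper transfers the model results to general $\omega$ in Section \ref{Uni section}, but only after the model asymptotics are in hand, and only via the modified matrix $\widehat W_n$ (with the error degrading to $\mathcal O(q^n)$ because $\det W_{n,-}=\mathcal O(q^n)$); it cannot by itself produce $F_1$, $F_2$, $\Omega^{(1)}_\infty$ or the $C_i$.

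The second gap is in items (2), (4), (6). You propose to bound term by term in the exact $q$-difference (jump) relation at a node $q^{2n-2-k}$, claiming coefficients of sizes $q^n,q^k,q^{2n}$. But in the relevant equation \eqref{1st diff v3} the coefficients at $t=q^{-k}$ are of sizes $q^{1-k}$, $q^{n+2-k}$ and $q^{2n+3-k}$, so expressing the value at a node in terms of its $q$-neighbours gives an a priori bound that \emph{grows} like $q^{-k}$; it cannot yield the stated decay $\mathcal O\bigl(\max(q^nP_{n,n}(\cdot),q^kP_{n,n}(\cdot),\ldots)\bigr)$. That decay is a property of the particular solution (polynomial times weight), and in the paper it is derived globally: since $v_n$ must approach a finite nonzero limit through the product \eqref{matrix poch}, its would-be poles at the zeros of $g$ must carry rapidly vanishing residues, which is exactly Equation \eqref{vanishing f} and Remark \ref{un 0s}; this argument is also what produces the $n$-independent threshold $k_c$. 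Without this (or an equivalent) mechanism, your sketch of the node estimates does not go through.
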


\begin{remark}
   Points (2), (4) and (6) in the first part of Theorem \ref{Main theorem} tell us that, past some critical value $k_c$,  $P_{n,n}(q^{2n}q^{-k})$ $($or conversely $P_{n\pm1,n\pm1})$ shrinks very rapidly as $k$ increases up to the point $k = 2n+1$ $(k\in\mathbb{N})$. In particular, Theorem \ref{Main theorem} tells us that $P_{n,n}(q^{2n-2}q^{-k})$ is much smaller than $P_{n,n}(q^{2n-2}q^{-k+1})$ or $P_{n,n}(q^{2n-2}q^{-k+2})$. Note that the factor of $(q^{2n+1}q^{-k};q)_\infty$ is bounded above by $1$ and below by $(q;q)_\infty$ for $k<2n+1$, and so it does not affect our order estimate of $P_{n,n}(q^{2n-2}q^{-k})$ for $k<2n+1$.  However, at $k=2n+1$, $(q^{2n+1}q^{-k};q)_\infty=0$. Furthermore, note that by following the arguments presented in Section \ref{Sect 3}, $k_c$ (which is independent of $n$) is a function of $\alpha$, $\beta$ and $q$ that can be numerically determined by considering Equation \eqref{f mat diff}. For example a possible (sub-optimal) choice is $k_c=|\alpha|+|\beta|+2+\ln_q(10)$.
    
\end{remark} 

\subsection{Outline}
The structure of the paper is detailed below:
\begin{enumerate}
    \item Adopting the nomenclature of  \cite{postelmans2005multiple}, we first study the simplest case of multiple little $q$-Jacobi polynomials of the first kind, with weights
    \begin{subequations}\label{the weights}
        \begin{eqnarray}
    w_1(x) = x^\alpha (xq;q)_\infty,\\
    w_2(x) = x^\beta (xq;q)_\infty.
\end{eqnarray}
    \end{subequations}

    Note that $(w_1(x)d_qx, w_2(x)d_qx)$ is an AT system and the polynomials are guaranteed to exist if $\alpha-\beta \notin \mathbb{Z}$ \cite[Chapter 23.6.5.1]{ismail2005classical}. We ascertain a corresponding RHP whose solution, $Y_n(z)$, will be given in terms of these multiple $q$-orthogonal polynomials. This then allows us to determine a Lax pair where one equation describes an iteration in the degree $n$, and the other, a $q$-difference equation in the complex variable $z$.
    \item We study the corresponding matrix $q$-difference equation in detail. This allows us to write down a third order $q$-difference equation satisfied by the entries of $Y_n(z)$, which in turn gives us valuable insight into the behaviour of the entries of $Y_n(z)$.
    \item Having used the $q$-difference equation to describe the asymptotic behaviour of $Y_n(z)$ for the simplest case of weights given by Equation \eqref{the weights}, we extend our results to the general class of weights satisfying Equation \eqref{the general weights}.
    \item We conclude with a discussion of the results found, drawing connections to earlier work done in the case of $q$-orthogonal polynomials with a single orthogonality condition. We also discuss possible directions for future research.
\end{enumerate}

\section{Constructing the RHP}
\subsection{Preliminary discussion}\label{Prelim sect}
In an earlier paper \cite[Lemma A.1]{jt2023}, we showed that there does not exist a function $h(z)$, such that $h(qz)=h(z)$ and $h(z)$ is meromorphic in $\mathbb{C}\setminus \{0\}$ with simple poles located exclusively at $z=q^k$, for $k\in\mathbb{Z}$. It follows that an equivalent statement can be made for functions $\hat{h}(z)$ which satisfy $\hat{h}(qz)=q^{j}\hat{h}(z)$ for $j\in\mathbb{Z}$. If such a function where to exist then $h(z) = \hat{h}(z)z^{-j}$ would violate \cite[Lemma A.1]{jt2023}. However, let $-1<\alpha<0$, and consider the function
\begin{equation}
    h_\alpha (z) = \sum_{k=-\infty}^\infty \frac{q^{k(1+\alpha)}}{z-q^k}.
\end{equation}
This sum converges for all $z \in \mathbb{C} \setminus \{q^{k}: k \in \mathbb{Z} \}\cup \{0\}$ and furthermore, direct calculation shows that $h_\alpha(qz) = q^\alpha h_\alpha(z)$. Suppose instead that $0<\alpha<1$, and consider the function
\begin{equation}
    h_\alpha (z) = \sum_{k=-\infty}^\infty \frac{zq^{k\alpha}}{z-q^k}.
\end{equation}
Once again, this sum converges for all $z \in \mathbb{C} \setminus \{q^{k}: k \in \mathbb{Z} \}\cup \{0\}$ and direct calculation shows $h_\alpha(qz) = q^\alpha h_\alpha(z)$. In fact, one can construct a meromorphic function in $\mathbb{C}\setminus \{0\}$ with simple poles located exclusively at $z=q^k$ for $k\in\mathbb{Z}$, and which satisfies $h_\alpha(qz) = q^\alpha h_\alpha(z)$ for any $\alpha \notin \mathbb{Z}$. Furthermore this function is unique up to scalar multiplication. We prove this statement in the following lemma.
\begin{lemma}\label{h lemma}
    There exists a unique (up to scalar multiplication) meromorphic function in $\mathbb{C}\setminus \{0\}$ with simple poles located exclusively at $z=q^k$ for $k\in\mathbb{Z}$, and which satisfies $h_\alpha(qz) = q^\alpha h_\alpha(z)$ for any $\alpha \notin \mathbb{Z}$.
\end{lemma}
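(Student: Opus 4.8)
The plan is to establish existence by exhibiting an explicit series, and uniqueness by a quotient argument reducing to the non-existence result of \cite[Lemma A.1]{jt2023}. For existence, write $\alpha = m + \alpha_0$ with $m\in\mathbb Z$ and $\alpha_0\in(-1,0)$ (or $(0,1)$), and set $h_\alpha(z) = z^m \tilde h_{\alpha_0}(z)$, where $\tilde h_{\alpha_0}$ is one of the two series already displayed in the preliminary discussion (according to the sign of $\alpha_0$). The preliminary discussion has already recorded that these series converge on $\mathbb C\setminus(\{q^k:k\in\mathbb Z\}\cup\{0\})$, that they are meromorphic on $\mathbb C\setminus\{0\}$ with only simple poles at the points $q^k$, $k\in\mathbb Z$, and that they satisfy the functional equation with multiplier $q^{\alpha_0}$. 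Multiplying by $z^m$ (holomorphic and non-vanishing on $\mathbb C\setminus\{0\}$) changes neither the pole structure nor the simplicity of the poles, and rescales the multiplier to $q^m q^{\alpha_0} = q^\alpha$. The only genuinely computational point here is the verification of the functional equation and of the convergence and pole location for the two base series, which is the "direct calculation" already asserted above; I would include it briefly by shifting the summation index $k\mapsto k-1$ and comparing residues.

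For uniqueness, suppose $h_\alpha$ and $g_\alpha$ are two such functions. Consider the quotient $r(z) = h_\alpha(z)/g_\alpha(z)$. First one must check that $g_\alpha$ is not identically zero and that the quotient is not pathological: since $g_\alpha$ has only simple poles at the $q^k$ and is meromorphic on $\mathbb C\setminus\{0\}$, its zero set is discrete in $\mathbb C\setminus\{0\}$, so $r$ is meromorphic on $\mathbb C\setminus\{0\}$. Moreover $r(qz) = \dfrac{h_\alpha(qz)}{g_\alpha(qz)} = \dfrac{q^\alpha h_\alpha(z)}{q^\alpha g_\alpha(z)} = r(z)$, so $r$ is $q$-periodic on $\mathbb C\setminus\{0\}$. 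At each $q^k$ both numerator and denominator have a simple pole, so $r$ extends holomorphically across $q^k$ with value equal to the ratio of residues (which is finite and, a priori, possibly zero). Hence $r$ is holomorphic on all of $\mathbb C\setminus\{0\}$, except possibly for poles at those $q^k$ where $g_\alpha$ has a zero of the residue-ratio — but by $q$-periodicity the location of any pole of $r$ is $q$-invariant, and if $r$ had a pole at some $z_0\neq 0$ it would have poles at all $q^jz_0$, which accumulate at $0$; that is fine in $\mathbb C\setminus\{0\}$, but then $r$ would be a $q$-periodic meromorphic function on $\mathbb C\setminus\{0\}$ with poles exactly along a $q$-orbit.

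This is where \cite[Lemma A.1]{jt2023} enters, and it is the step I expect to carry the real weight. That lemma (together with the remark immediately preceding this one, extending it to functions with functional equation $\hat h(qz)=q^j\hat h(z)$) rules out $q$-periodic meromorphic functions on $\mathbb C\setminus\{0\}$ whose only poles lie along the single $q$-orbit $\{q^k\}$. One must argue that $r$, if non-constant, produces exactly such a forbidden function: either $r$ itself has its poles along a $q$-orbit, contradicting the lemma directly, or $r$ is holomorphic and non-vanishing on $\mathbb C\setminus\{0\}$, in which case $1/r$ is also holomorphic there and $\log$-considerations (or the fact that a nowhere-zero $q$-periodic entire-on-$\mathbb C^*$ function with an essential singularity at $0$ and at $\infty$ still cannot be non-constant by a Liouville-type/Laurent argument on the annulus fundamental domain) force $r$ to be constant. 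The cleanest route is: $r$ is $q$-periodic and meromorphic on $\mathbb C^*$, so it descends to a meromorphic function on the quotient torus $\mathbb C^*/q^{\mathbb Z}$; a non-constant such function must have at least one pole, and pulling that pole back gives a pole of $r$ along a $q$-orbit in $\mathbb C^*$; if that orbit is $\{q^k\}$ we contradict \cite[Lemma A.1]{jt2023} directly, and if it is some other orbit $\{q^kz_0\}$ then $h_\alpha = r\, g_\alpha$ would have a pole at $z_0\notin\{q^k\}$, contradicting the hypothesis on $h_\alpha$. Hence $r$ has no poles on the torus, so $r$ is constant, giving $h_\alpha = c\,g_\alpha$. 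The main obstacle is thus the careful bookkeeping at the points $q^k$ — showing the quotient genuinely has removable singularities there rather than poles or essential singularities — and correctly invoking the torus/Liouville argument so that the only escape hatch is the constant case already excluded by the cited lemma.
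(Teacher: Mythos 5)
Your existence step is fine, and it is genuinely different from the paper's: you reduce a general $\alpha\notin\mathbb{Z}$ to the two explicit series of the preliminary discussion by writing $q^{\alpha}=q^{m}q^{\alpha_0}$ and multiplying by $z^{m}$, whereas the paper gets existence from the product formula $\hat h_2(z)=(q^{\alpha+1}z;q)_\infty(q^{-\alpha}z^{-1};q)_\infty$ divided by $\tilde h_1(z)=(qz;q)_\infty(z^{-1};q)_\infty$. The gap is in your uniqueness argument, at the final dichotomy. If $r=h_\alpha/g_\alpha$ has a pole at some $z_0\notin q^{\mathbb Z}$, that pole occurs precisely because $g_\alpha$ vanishes at $z_0$ to higher order than $h_\alpha$; in the identity $h_\alpha=r\,g_\alpha$ the zero of $g_\alpha$ cancels the pole of $r$, so $h_\alpha$ is perfectly holomorphic at $z_0$ and no contradiction with the hypothesis on $h_\alpha$ arises. (The other branch of your dichotomy, poles of $r$ on the orbit $\{q^k\}$, is vacuous since you already showed those singularities are removable; note also that Lemma A.1 of \cite{jt2023} only excludes \emph{simple} poles, so it could not handle higher-order poles anyway.) So the torus/Liouville argument as written does not rule out a non-constant $r$: a priori $g_\alpha$ can have zeros off the orbit $q^{\mathbb Z}$ (it does --- for the function the paper constructs they lie on the orbit $q^{-\alpha}q^{\mathbb Z}$), and you have not excluded a $q$-periodic $r$ whose poles sit at those zeros.

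To close the gap you need some control on the zero divisor of $g_\alpha$. One route: the multiplier relation $g_\alpha(qz)=q^{\alpha}g_\alpha(z)$ gives $g_\alpha'(qz)/g_\alpha(qz)=g_\alpha'(z)/(qg_\alpha(z))$, so the argument principle over the boundary of a fundamental annulus $\{|q|r<|z|\le r\}$ shows that zeros minus poles of $g_\alpha$ there is zero; since $g_\alpha$ has exactly one simple pole per annulus, it has exactly one zero (with multiplicity) per annulus, hence $r$ would descend to a meromorphic function on $\mathbb{C}^{*}/q^{\mathbb Z}$ with a single simple pole, which does not exist, forcing $r$ constant. The paper sidesteps all of this: it multiplies by $\tilde h_1$, so that $\tilde h_2=h_\alpha\tilde h_1$ is holomorphic on $\mathbb{C}\setminus\{0\}$ and satisfies $\tilde h_2(q^{-1}z)=-zq^{\alpha}\tilde h_2(z)$, whose Laurent coefficients obey the first-order recurrence $c_jq^{-j}=c_{j-1}q^{\alpha}$; the solution space is therefore one-dimensional, giving uniqueness immediately, and the identification with $\hat h_2$ also supplies existence and the pole locations. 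Either repair works, but as written your case (b) step is invalid.
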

\begin{proof}
    Consider the function
    \[ \tilde{h}_1(z) = (qz;q)_\infty (z^{-1};q)_\infty .\]
    By direct calculation one can show $\tilde{h}_1(q^{-1}z) = -z\tilde{h}_1(z)$. Suppose that $h_\alpha(z)$ exists and let us define
    \begin{equation}\label{halha def}
        \tilde{h}_2(z) = h_{\alpha}(z)\tilde{h}_1(z). 
    \end{equation} 
    It follows that $\tilde{h}_2(z)$ is analytic everywhere in $\mathbb{C}\setminus \{0\}$ and satisfies the difference equation
    \begin{equation}\label{f2 difference}
        \tilde{h}_2(q^{-1}z) = -zq^{\alpha}\tilde{h}_2(z).
    \end{equation} 
    As $\tilde{h}_2(z)$ is analytic in $\mathbb{C}\setminus \{0\}$ it can be written in terms of a Laurent series. Substituting this into Equation \eqref{f2 difference} we find
    \begin{equation}\label{f2 series}
        \tilde{h}_2(q^{-1}z) = \sum_{j=-\infty}^\infty c_jq^{-j}z^j = -\sum_{j=-\infty}^\infty c_jq^{\alpha}z^{j+1} .
    \end{equation}
    Equating coefficients of $z$ in Equation \eqref{f2 series} gives us the recurrence relation
    \[ c_jq^{-j} = c_{j-1}q^{\alpha}.\]
    The above recurrence relation clearly has a unique solution up to scalar multiplication (i.e. given $c_0=1$ the solution is unique). Furthermore, direct calculation shows that the function $\hat{h}_2(z)$ defined as
    \[ \hat{h}_2(z) = (q^{\alpha+1}z;q)_\infty (q^{-\alpha}z^{-1};q)_\infty ,\]
    satisfies Equation \eqref{f2 difference}. Hence, $\hat{h}_2(z)=\lambda \tilde{h}_2(z)$ for some scalar constant $\lambda$. Thus, the zeros of $\tilde{h}_2(z)$ do not lie at $z=q^{k}$ and it follows $ h_{\alpha}(z)$ defined by Equation \eqref{halha def} has simple poles at $z=q^{k}$ as required.
\end{proof}

We will construct our RHP using these $h_\alpha(z)$ functions. 
\begin{remark}
    One could instead use a residue condition, similar to Baik et al. \cite{baik2007discrete}, in order to construct the RHP. This would avoid the need for having $h_\alpha(z)$ and hence allow for $\alpha \in\mathbb{Z}$. The condition that $\alpha \notin \mathbb{Z}$ is required to proceed with our asymptotic argument but it is not needed to define the multiple orthogonal polynomials themselves, which are well defined for any $\alpha$, $\beta$ such that $\alpha\neq \beta \mod \mathbb{Z}$.
\end{remark}

\subsection{The RHP and Lax Pair}
The $3\times3$ matrix function $Y_n(z)$ is a solution to RHP I if it satisfies the following three conditions:
\begin{enumerate}[label={{\rm (\roman *)}}]
\begin{subequations}
\item $Y_n(z)$ is analytic on $\mathbb{C}\setminus \Gamma$.
\item $Y_n(z)$ has continuous boundary values $Y_{n,-}(s)$ and $Y_{n,+}(s)$ as $z$ approaches $s\in\Gamma$ from $\mathcal D_-$ and $\mathcal D_+$ respectively, where $\Gamma$ is an appropriate curve (see Definition \ref{admissable}). Furthermore, $Y_{n,-}(s)$ and $Y_{n,+}(s)$ are related by the jump condition:
\begin{gather} \label{jump cond}
Y_{n,+}(s) =
Y_{n,-}(s)
\begin{pmatrix}
1 &
w(s)h_\alpha(s) & w(s)h_\beta(s) \\
0 & 1 & 0 \\
0 & 0 & 1 
\end{pmatrix}, \; s\in \Gamma ,
\end{gather}
$w(s) = (sq;q)_\infty$ and $h_\alpha(s)$ and $h_\beta(s)$ are as discussed in Section \ref{Prelim sect}.
\item $Y_n(z)$ satisfies
\begin{gather} \label{decay cond}
Y_n(z)\begin{pmatrix}
z^{-2n} & 0 & 0\\
0 & z^{n} & 0 \\
0 & 0 &z^{n}
\end{pmatrix}
=
I + \mathcal{O}\left( \frac{1}{z} \right), \text{ as }\ |z| \rightarrow \infty.
\end{gather}

\end{subequations}
\end{enumerate}

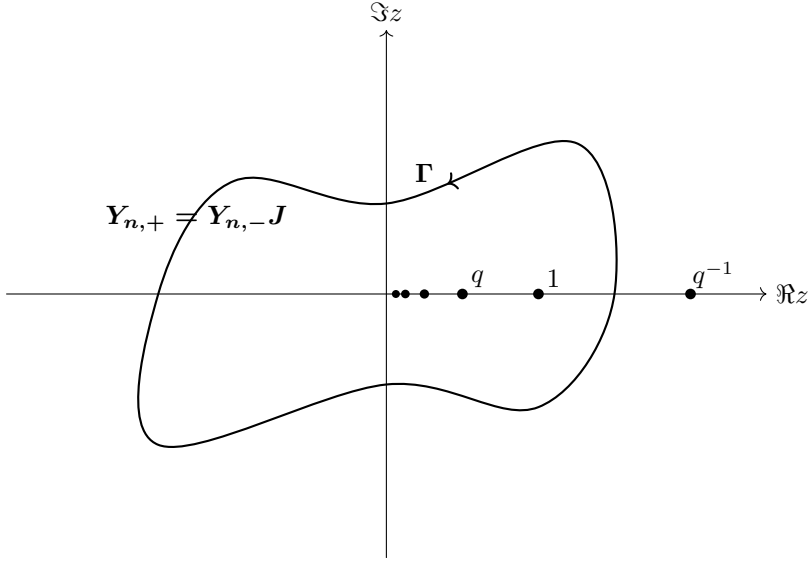
\begin{figure}[ht]
\centering

\begin{tikzpicture}[scale=1]

\draw[->] (-5,0)--(5,0) node[right]{$\Re{z}$};

\draw[->] (0,-3.5)--(0,3.5) node[above]{$\Im{z}$};

\tikzstyle{star}  = [circle, minimum width=4pt, fill, inner sep=0pt];

\tikzstyle{starsmall}  = [circle, minimum width=3.5pt, fill, inner sep=0pt];

\tikzstyle{vstarsmall}  = [circle, minimum width=3.25pt, fill, inner sep=0pt];

\tikzstyle{vvstarsmall}  = [circle, minimum width=3pt, fill, inner sep=0pt];

\node[star]     (q0) at ({2*1},{0} ) {};
\node     at ($(q0)+(0.2,0.2)$) {$1$};

\node[star]     (qm) at ({2*2)},{0} ) {};
\node     at ($(qm)+(0.3,0.25)$) {$q^{-1}$};



\node[star]     (iqm) at ({0.5*2},{0} ) {};
\node     at ($(iqm)+(0.2,0.2)$) {$q$};

\node[starsmall]     (iqm) at ({0.25*2},{0} ) {};
\node[vstarsmall]     (iqm) at ({0.125*2},{0} ) {};
\node[vvstarsmall]     (iqm) at ({0.125*2*0.5},{0} ) {};

\draw [black,thick,decoration={markings, mark=at position 0.68 with {\arrow{<}}},postaction={decorate}] plot [smooth cycle,tension=0.6] coordinates {(3,0) (2,-1.5) (0,-1.2) (-3,-2) (-3,0) (-2,1.5) (0,1.2) (2.5,2) };  
\node[black]     at (0.5,1.6) {$\boldsymbol{\Gamma}$};
\node    at (-2.5,1) {$\boldsymbol{Y_{n,+} = Y_{n,-}J}$};
\end{tikzpicture}
\caption{Example of an \textit{appropriate} curve $\Gamma$.}
\label{RHP-figure}
\end{figure}

Following the arguments presented in \cite{qRHP} and the definition of multiple orthogonal polynomials one readily deduces that RHP I has the unique solution:
\begin{gather} \label{RHP sol 1}
Y_n(z) 
=
\begin{bmatrix}
   P_{n,n}(z) &
   \oint_\Gamma \frac{P_{n,n}(s)w(s)h_\alpha(s)}{z-s} ds&
   \oint_\Gamma \frac{P_{n,n}(s)w(s)h_\beta(s)}{z-s} ds \\
   \frac{1}{\gamma_1^{(n-1,n)}}P_{n-1,n}(z)&
    \frac{1}{\gamma_1^{(n-1,n)}}\oint_\Gamma \frac{P_{n-1,n}(s)w(s)h_\alpha(s)}{z-s} ds &
    \frac{1}{\gamma_1^{(n-1,n)}}\oint_\Gamma \frac{P_{n-1,n}(s)w(s)h_\beta(s)}{z-s} ds\\
    \frac{1}{\gamma_2^{(n,n-1)}}P_{n,n-1}(z) &
    \frac{1}{\gamma_2^{(n,n-1)}}\oint_\Gamma \frac{P_{n,n-1}(s)w(s)h_\alpha(s)}{z-s} ds&
    \frac{1}{\gamma_2^{(n,n-1)}}\oint_\Gamma \frac{P_{n,n-1}(s)w(s)h_\beta(s)}{z-s} ds
   \end{bmatrix},
\end{gather}
where $P_{n,m}(z)$ satisfies Equation \eqref{ortho 1}. 
\begin{subequations}\label{ortho 1}
\begin{eqnarray}
\int_{0}^1 P_{n,m}(x)x^kw_{1}(x) d_qx &=& 0,\,\mathrm{for}\,k<n ,\\
\int_{0}^1 P_{n,m}(x)x^jw_{2}(x) d_qx &=& 0,\,\mathrm{for}\,j<m ,
\end{eqnarray}
\end{subequations}
and $w_1(x)$ and $w_2(x)$ are given by Equation \eqref{the weights}. \\

Note that for $z\in\text{ext}(\Gamma)$ the solution to RHP I given by Equation \eqref{RHP sol 1} looks like,
\begin{gather} \label{RHP sol 2}
Y_n(z) 
=
\begin{bmatrix}
   P_{n,n}(z) &
   \sum_{k=0}^\infty \frac{P_{n,n}(q^k)w_1(q^k)q^k}{z-q^k} &
   \sum_{k=0}^\infty \frac{P_{n,n}(q^k)w_2(q^k)q^k}{z-q^k} \\
   \frac{1}{\gamma_1^{(n-1,n)}}P_{n-1,n}(z)&
    \frac{1}{\gamma_1^{(n-1,n)}}\sum_{k=0}^\infty \frac{P_{n-1,n}(q^k)w_1(q^k)q^k}{z-q^k} &
    \frac{1}{\gamma_1^{(n-1,n)}}\sum_{k=0}^\infty \frac{P_{n-1,n}(q^k)w_2(q^k)q^k}{z-q^k}\\
    \frac{1}{\gamma_2^{(n,n-1)}}P_{n,n-1}(z) &
    \frac{1}{\gamma_2^{(n,n-1)}}\sum_{k=0}^\infty \frac{P_{n,n-1}(q^k)w_1(q^k)q^k}{z-q^k}&
    \frac{1}{\gamma_2^{(n,n-1)}}\sum_{k=0}^\infty \frac{P_{n,n-1}(q^k)w_2(q^k)q^k}{z-q^k}
   \end{bmatrix}.
\end{gather}

\begin{remark}\label{could generalise remark}
    Combining the work of \cite[Section 5]{BK} and \cite{qRHP} one can readily pose a more general form of RHP 1 corresponding to $P_{n,m}(z)$ where $m$ is independent of $n$. We study the more specific case of $m=n$ as this pertains more closely the polynomials used in \cite{POSTELMANS2007119}. Furthermore, the simplification $m=n$ reduces the algebraic complexity of our asymptotic argument which helps highlight key points and the close connection to $q$-calculus. 
\end{remark}

\begin{remark}
    Using standard arguments in RHP analysis it follows that $\mathrm{det}(Y_n(z))=1$ $($see for example \cite{Deift1999strong},\cite{qRHP}$)$. 
\end{remark}

\begin{remark}
    Note that the orthogonality condition, Equation \eqref{ortho 1}, is with respect to the weights $w_1(x) = x^\alpha (xq;q)_\infty$ and $w_2 = x^\beta(xq;q)_\infty$. However, the jump condition, Equation \eqref{jump cond}, has off-diagonal entries $h^\alpha(s)(sq;q)_\infty$ and $h^\beta(s)(sq;q)_\infty$. This is because it is the residue of $h^{\alpha}(z)$ that allows us to write the Cauchy transform as the sum of the residues of $h^{\alpha}(z)(sq;q)_\infty$ $($compare Equations \eqref{RHP sol 1} and \eqref{RHP sol 2}$)$, which in turn corresponds to the discrete Jackson integral given in Equation \eqref{ortho 1}. 
\end{remark}

As $Y_n(z)$ is a solution to RHP I we will now show that it satisfies a Lax pair. First, we look at an iteration in the degree $n$. Define,
\begin{equation}
    V_n(z) = Y_{n+1}(z)Y_n(z)^{-1}.
\end{equation}
As solutions to RHP I, for $n+1$ and $n$ respectively, both the entries of $ Y_{n+1}(z)$ and $Y_n(z)$ are analytic in $\mathbb{C}\setminus \Gamma$. Furthermore, as det($Y_n(z)$)=1 it follows that the entries of $Y_n(z)^{-1}$ are analytic in $\mathbb{C}\setminus \Gamma$. Hence, $V_n(z)$ is analytic in $\mathbb{C}\setminus \Gamma$. The jump for $V_n(z)$ across $\Gamma$ is given by 
\begin{eqnarray*}
    V_n^-(z)^{-1}V_n^{+}(z) &=& (Y_{n+1}^-(z)Y_n^-(z)^{-1})^{-1}(Y_{n+1}^+(z)Y_n^+(z)^{-1}),\\
    &=& Y_n^-(z)Y_{n+1}^-(z)^{-1}Y_{n+1}^+(z)Y_n^+(z)^{-1},\\
    &=& Y_n^-(z)J Y_n^+(z)^{-1},\\
    &=& Y_n^+(z) Y_n^+(z)^{-1},\\
    &=& I,
\end{eqnarray*}
where $J$ is the jump matrix defined by Equation \eqref{jump cond}. Thus, the entries of $V_n(z)$ are analytic everywhere. Their behaviour as $z\to\infty$ can be found using Equation \eqref{decay cond} allowing one to determine the polynomial entries of $V_n(z)$ whose coefficients are related to the series expansion of $Y_n(z)$ at $z\to\infty$. This analysis provides us with the well known four term recurrence relation satisfied by multiple orthogonal polynomials \cite{MultRec}.\\

We will now study the $q$-difference component of the Lax pair. Let $Y_n(z)$ be a solution to RHP I with an appropriate curve $\Gamma$. As the jump matrix, Equation \eqref{jump cond}, has analytic entries in ext($\Gamma$) it follows by analytic continuation that $Y_n(z)$ is also a solution to a corresponding RHP with a jump at $\Gamma_{q^{-1}}$ (recall Definition \ref{scaled curve}). The jump matrix along this curve is given by
\begin{eqnarray*}
\begin{pmatrix}
1 &
w(s)h_\alpha(s) & w(s)h_\beta(s) \\
0 & 1 & 0 \\
0 & 0 & 1 
\end{pmatrix},\, \text{for}\,s\in\Gamma_{q^{-1}} &=& \begin{pmatrix}
1 &
w(q^{-1}s)h_\alpha(q^{-1}s) & w(q^{-1}s)h_\beta(q^{-1}s) \\
0 & 1 & 0 \\
0 & 0 & 1 
\end{pmatrix},\, \text{for}\,s\in\Gamma,\\
&=&\begin{pmatrix}
1 &
(1-s)w(s)q^{-\alpha}h_\alpha(s) & (1-s)w(s)q^{-\beta}h_\beta(s) \\
0 & 1 & 0 \\
0 & 0 & 1 
\end{pmatrix},\, \text{for}\,s\in\Gamma.
\end{eqnarray*}
Define,
\begin{equation}
    T_n(z)  = Y_n(q^{-1}z) \begin{bmatrix}
   (1-z) &
   0 &
   0 \\
   0 &
   q^{\alpha} &
   0\\
   0&0&q^{\beta}
\end{bmatrix}
\end{equation}
by the above arguments it follows that $T_n(z)$ satisfies the conditions
\begin{enumerate}
\begin{subequations}
\item $T_n(z)$ is analytic on $\mathbb{C}\setminus \Gamma$.
\item $T_n(z)$ has continuous boundary values $T_{n,-}(s)$ and $T_{n,+}(s)$ as $z$ approaches $s\in\Gamma$ from $\mathcal D_-$ and $\mathcal D_+$ respectively, where
\begin{gather} \nonumber
T_{n,+}(s) =
T_{n,-}(s)
\begin{pmatrix}
1 &
w(s)h_\alpha(s) & w(s)h_\beta(s) \\
0 & 1 & 0 \\
0 & 0 & 1 
\end{pmatrix}, \; s\in \Gamma .
\end{gather}

\item $T_n(z)$ satisfies
\begin{gather} \nonumber
T_n(z)\begin{pmatrix}
-q^{2n}z^{-2n-1} & 0 & 0\\
0 & q^{-n-\alpha}z^{n} & 0 \\
0 & 0 &q^{-n-\beta}z^{n}
\end{pmatrix}
=
I + \mathcal{O}\left( \frac{1}{z} \right), \text{ as }\ |z| \rightarrow \infty.
\end{gather}
\end{subequations}
\end{enumerate}
Thus, it follows that $D_n(z) = T_n(z)Y_n(z)^{-1}$ has no jump in the complex plane and its entries are entire, with the highest power being $z^1$. In fact, writing $Y_n(z)$ as a power series at $z=\infty$ one finds that the entries of $D_n(z)$ are given by.
\begin{equation}\label{d def}
    D_n(z) = \begin{bmatrix}
    \mu_{1}-zq^{-2n} & \mu_{2} &\mu_{3}\\
   \mu_{4}&q^{n+\alpha}&0 \\
   \mu_{5}&0&q^{n+\beta}
   \end{bmatrix} ,
\end{equation} 
Note that $\mu_{i}$ are constants with respect to $z$ but these constants are functions of $n$.

\section{Understanding the $q$-difference component of the Lax pair}
We have just determined that a solution to RHP I satisfies the $q$-difference equation 
\begin{equation}\label{the main diff}
    Y_n(q^{-1}z) \begin{bmatrix}
   (1-z) &
   0 &
   0 \\
   0 &
   q^{\alpha} &
   0\\
   0&0&q^{\beta}
\end{bmatrix} = D_n(z)Y_n(z).
\end{equation}
However, unfortunately we do not know what the value of $\mu_{i}$ is (see Equation \eqref{d def}) without already knowing the first couple of entries of the series expansion of the solution $Y_n(z)$ at $z=\infty$. In the following lemmas we try to gain some insight into $\mu_{i}$ based on our understanding of RHP I. 
\begin{lemma}
    The eigenvalues of $D_n(0)$ are given by $\lambda = 1,q^{\alpha},q^{\beta}$.
\end{lemma}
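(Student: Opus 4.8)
The plan is to evaluate the $q$-difference equation \eqref{the main diff} at $z=0$ and read off the structure of $D_n(0)$. Setting $z=0$ in \eqref{the main diff}, the left-hand factor $\mathrm{diag}(1-z,q^\alpha,q^\beta)$ becomes $\mathrm{diag}(1,q^\alpha,q^\beta)$, so we obtain
\begin{equation}\nonumber
Y_n(0)\,\mathrm{diag}(1,q^\alpha,q^\beta) = D_n(0)\,Y_n(0).
\end{equation}
Since $\det Y_n(z)\equiv 1$ (noted in the excerpt), $Y_n(0)$ is invertible, and hence $D_n(0) = Y_n(0)\,\mathrm{diag}(1,q^\alpha,q^\beta)\,Y_n(0)^{-1}$. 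Thus $D_n(0)$ is similar to the diagonal matrix $\mathrm{diag}(1,q^\alpha,q^\beta)$, and its eigenvalues are exactly $1,q^\alpha,q^\beta$.

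The one point that needs care is the justification that $Y_n(z)$ is actually analytic (and in particular finite and invertible) at $z=0$: the Cauchy-transform entries in \eqref{RHP sol 1} involve $h_\alpha$, $h_\beta$, which have poles along the $q$-lattice, and the series form \eqref{RHP sol 2} manifestly has poles at each $z=q^k$, including accumulating at $z=0$. So I would first record why the combination appearing in $Y_n(z)$ is regular at the origin: inside $\Gamma$ (i.e. for $z\in\mathcal D_-$) the contour-integral representation \eqref{RHP sol 1} is used rather than the residue sum, and there $s$ ranges over $\Gamma$, which stays a fixed distance from $0$; the integrand is analytic in $z$ in a neighbourhood of $0$, so each entry of $Y_n(z)$ extends analytically to $z=0$. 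This is precisely the regularity at $z=0$ alluded to in the introduction (the remark that "the existence of $h_\alpha(z)$ and $h_\beta(z)$ can be seen as a consequence of the existence of a solution to the RHP that is analytic at $z=0$"). Alternatively, one can argue directly from \eqref{the main diff}: $D_n(z)$ is entire with entries of degree at most $1$ by \eqref{d def}, so $D_n(0)$ is a finite matrix, and evaluating the functional equation along a sequence $z=q^{k}\to 0$ avoiding the poles, together with the known behaviour of $Y_n$, pins down the relation above.

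The main (and really only) obstacle is therefore this regularity/invertibility check at $z=0$; once that is in hand the conjugation identity is immediate and the eigenvalue claim follows with no computation. I would present the argument in the order: (1) note $Y_n$ analytic and invertible at $z=0$ via the interior representation \eqref{RHP sol 1} and $\det Y_n\equiv 1$; (2) set $z=0$ in \eqref{the main diff} to get $D_n(0)=Y_n(0)\,\mathrm{diag}(1,q^\alpha,q^\beta)\,Y_n(0)^{-1}$; (3) conclude the spectrum of $D_n(0)$ is $\{1,q^\alpha,q^\beta\}$.
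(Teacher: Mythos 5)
Your argument is correct, and it is a cleaner, more linear-algebraic packaging of what the paper does. The paper's own proof works column by column: it multiplies the first column by $(qz;q)_\infty$, expands each column of $Y_n$ in a power series about $z=0$, substitutes into \eqref{the main diff}, and reads off the constant-order term, concluding that consistency forces $1$, $q^\alpha$, $q^\beta$ each to be an eigenvalue of $D_n(0)$. That constant-order term is exactly your evaluation at $z=0$, so the key identity exploited is the same; but you go one step further by assembling the three column relations into $Y_n(0)\,\mathrm{diag}(1,q^\alpha,q^\beta)=D_n(0)\,Y_n(0)$ and invoking $\det Y_n\equiv 1$ to get invertibility of $Y_n(0)$ and hence a genuine similarity. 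This buys you two things the paper leaves implicit: it shows $D_n(0)$ is diagonalizable with spectrum exactly $\{1,q^\alpha,q^\beta\}$ (not merely that these values occur as eigenvalues), and it closes the small gap in the paper's "consistency" step, which tacitly needs the constant vectors $[a_0,b_0,c_0]^T$ to be nonzero --- in your version this is supplied by invertibility of $Y_n(0)$. Your care about which branch is evaluated at $z=0$ is also well placed and matches the paper's justification: near the origin one uses the interior (contour-integral) representation \eqref{RHP sol 1}, which is analytic at $z=0$, not the residue-sum form \eqref{RHP sol 2}; with $z$ and $q^{-1}z$ both small the identity \eqref{the main diff} holds for the interior branch by analytic continuation, and $D_n(0)$ is finite by \eqref{d def}. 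So the proposal is sound; the only difference from the paper is presentational, and if anything your route is the tighter one.
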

\begin{proof}
    The first column of Equation \eqref{the main diff} reads
    \[ Y_n^{(1)}(q^{-1}z)(1-z) = D_n(z)Y_n^{(1)}(z) .\]
    Let $U_n(z) = Y_n^{(1)}(z)(qz;q)_\infty$. It follows that 
    \[ U_n(q^{-1}z) = Y_n^{(1)}(q^{-1}z)(1-z)(qz;q)_\infty,\]
    and subsequently $U_n(z)$ satisfies the $q$-difference equation
    \[ U_n(q^{-1}z) = D_n(z)U_n(z) .\]
    We know that as $Y_n(z)$ solves RHP I, it has entries which are analytic in a neighbourhood about $z=0$, these are the entries which compose $Y_n(z)$ restricted to $\mathcal{D}_-$ (in fact we know that these entries are entire). Thus, as $(qz;q)_\infty$ is entire, the entries of $U_n(z)$ are also entire. Hence, $U_n(z)$ can be written as the converging power series:
\begin{equation}
    U_n(z) =  \sum_{i=0}^\infty\begin{bmatrix}
   a_i z^i\\
   b_i z^i\\
    c_i z^i
\end{bmatrix}.
\end{equation}
Substituting this series representation into the $q$-difference equation satisfied by $U_n(z)$ we find,
\begin{eqnarray*}
   \sum_{i=0}^\infty\begin{bmatrix}
   a_i q^{-i}z^i\\
   b_i q^{-i}z^i\\
    c_i q^{-i}z^i
 \end{bmatrix}&=& \begin{bmatrix}
    \mu_{1}-zq^{-2n} & \mu_{2} &\mu_{3}\\
   \mu_{4}&q^{n+\alpha}&0 \\
   \mu_{5}&0&q^{n+\beta}
   \end{bmatrix}\sum_{i=0}^\infty\begin{bmatrix}
   a_i z^i\\
   b_i z^i\\
    c_i z^i
 \end{bmatrix},\\
 0
 &=& \begin{bmatrix}
    \mu_{1}-q^{-i} & \mu_{2} &\mu_{3}\\
   \mu_{4}&q^{n+\alpha}-q^{-i}&0 \\
   \mu_{5}&0&q^{n+\beta}-q^{-i}
   \end{bmatrix}\sum_{i=0}^\infty\left(\begin{bmatrix}
   a_i z^i\\
   b_i z^i\\
    c_i z^i
 \end{bmatrix}+ \begin{bmatrix}
   a_i q^{-2n}z^{i+1}\\
   0\\
    0
 \end{bmatrix}\right),\\
  0
 &=& \begin{bmatrix}
    \mu_{1}-1 & \mu_{2} &\mu_{3}\\
   \mu_{4}&q^{n+\alpha}-1&0 \\
   \mu_{5}&0&q^{n+\beta}-1
   \end{bmatrix} \begin{bmatrix}
   a_0 \\
   b_0 \\
    c_0 
 \end{bmatrix}+ \mathcal{O}(z).
\end{eqnarray*}
Thus, for the above equation to be consistent we require that 1 is an eigenvalue of $D_n(0)$. Let us now turn our attention to the second column of Equation \eqref{the main diff}. The $q$-difference for this column reads
\[ Y_n^{(2)}(q^{-1}z)q^\alpha = D_n(z)Y_n^{(2)}(z).\]
Repeating our earlier arguments, we conclude that $Y_n^{(2)}(z)$ has entries which are entire and can be written as the converging power series
\begin{equation}
    Y_n^{(2)}(z) =  \sum_{i=0}^\infty\begin{bmatrix}
   a_i z^i\\
   b_i z^i\\
    c_i z^i
\end{bmatrix},
\end{equation}
where clearly $a_i$, $b_i$ and $c_i$ are different from $U_n(z)$. Substituting this series representation into the $q$-difference equation satisfied by $Y_n^{(2)}(z)$ we find,
\begin{eqnarray}
   q^{\alpha}\sum_{i=0}^\infty\begin{bmatrix}
   a_i q^{-i}z^i\\
   b_i q^{-i}z^i\\
    c_i q^{-i}z^i
 \end{bmatrix}&=& \begin{bmatrix}
    \mu_{1}-zq^{-2n} & \mu_{2} &\mu_{3}\\
   \mu_{4}&q^{n+\alpha}&0 \\
   \mu_{5}&0&q^{n+\beta}
   \end{bmatrix}\sum_{i=0}^\infty\begin{bmatrix}
   a_i z^i\\
   b_i z^i\\
    c_i z^i
 \end{bmatrix},\nonumber\\
 0
 &=& \begin{bmatrix}
    \mu_{1}-q^{-i+\alpha} & \mu_{2} &\mu_{3}\\
   \mu_{4}&q^{n+\alpha}-q^{-i+\alpha}&0 \\
   \mu_{5}&0&q^{n+\beta}-q^{-i+\alpha}
   \end{bmatrix}\sum_{i=0}^\infty\left(\begin{bmatrix}
   a_i z^i\\
   b_i z^i\\
    c_i z^i
 \end{bmatrix}+ \begin{bmatrix}
   a_i q^{-2n}z^{i+1}\\
   0\\
    0
 \end{bmatrix}\right),\nonumber\\
  0
 &=& \begin{bmatrix}
    \mu_{1}-q^{\alpha} & \mu_{2} &\mu_{3}\\
   \mu_{4}&q^{n+\alpha}-q^{\alpha}&0 \\
   \mu_{5}&0&q^{n+\beta}-q^{\alpha}
   \end{bmatrix} \begin{bmatrix}
   a_0 \\
   b_0 \\
    c_0 
 \end{bmatrix}+ \mathcal{O}(z).\label{Equation X}
\end{eqnarray}
For the above equation to be consistent we now require that $q^\alpha$ is an eigenvalue of $D_n(0)$. Repeating the same arguments for the third column it follows that $q^{\beta}$ must also be an eigenvalue $D_n(0)$.
\end{proof}

\begin{lemma}\label{coeff determination}
    The entries of $D_n(z)$ satisfy
    \begin{eqnarray}
        \mu_1 &=& 1+q^\alpha +q^\beta - q^{\alpha+n} - q^{\beta+n}.\\
        \mu_2\mu_4 &=& \frac{q^{\alpha-n}(q^n-1)(q^{\alpha+n}-1)(q^{\alpha+n}-q^\beta)}{q^{\beta}-q^\alpha}.\\
        \mu_3\mu_5 &=& \frac{q^{\beta-n}(q^n-1)(q^{\beta+n}-1)(q^{\beta+n}-q^\alpha)}{q^{\alpha}-q^\beta}.
    \end{eqnarray}
\end{lemma}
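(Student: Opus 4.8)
The plan is to extract the three relations by combining the eigenvalue information from the previous lemma with two further structural facts about $D_n(z)$: the value of $\det(D_n(z))$ and the trace/characteristic-polynomial identities at $z=0$. First I would compute $\det(D_n(z))$ directly from the explicit form \eqref{d def}. Expanding along the first row, $\det(D_n(z)) = (\mu_1 - zq^{-2n})q^{2n+\alpha+\beta} - \mu_2(\mu_4 q^{n+\beta}) + \mu_3(-\mu_5 q^{n+\alpha})$, which is an affine function of $z$. On the other hand, from \eqref{the main diff} we have $D_n(z) = Y_n(q^{-1}z)\,\mathrm{diag}(1-z,q^\alpha,q^\beta)\,Y_n(z)^{-1}$, so $\det(D_n(z)) = (1-z)q^{\alpha+\beta}\det(Y_n(q^{-1}z))/\det(Y_n(z)) = (1-z)q^{\alpha+\beta}$, using $\det(Y_n(z))=1$ (noted in the excerpt). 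Matching the constant term and the coefficient of $z$ in these two expressions for $\det(D_n(z))$ gives two scalar equations relating $\mu_1$, $\mu_2\mu_4$ and $\mu_3\mu_5$.

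Next I would use the eigenvalue lemma: $D_n(0)$ has eigenvalues $1, q^\alpha, q^\beta$, so its characteristic polynomial is $(\lambda-1)(\lambda-q^\alpha)(\lambda-q^\beta)$. Reading off the elementary symmetric functions of the eigenvalues against the entries of $D_n(0) = \begin{bmatrix}\mu_1 & \mu_2 & \mu_3\\ \mu_4 & q^{n+\alpha} & 0\\ \mu_5 & 0 & q^{n+\beta}\end{bmatrix}$ yields three more identities:
\begin{subequations}
\begin{align}
\mathrm{tr}: \quad & \mu_1 + q^{n+\alpha} + q^{n+\beta} = 1 + q^\alpha + q^\beta,\\
e_2: \quad & \mu_1 q^{n+\alpha} + \mu_1 q^{n+\beta} + q^{2n+\alpha+\beta} - \mu_2\mu_4 - \mu_3\mu_5 = 1\cdot q^\alpha + 1\cdot q^\beta + q^{\alpha+\beta},\\
\det: \quad & \mu_1 q^{2n+\alpha+\beta} - \mu_2\mu_4\, q^{n+\beta} - \mu_3\mu_5\, q^{n+\alpha} = q^{\alpha+\beta}.
\end{align}
\end{subequations}
The trace identity immediately gives $\mu_1 = 1 + q^\alpha + q^\beta - q^{n+\alpha} - q^{n+\beta}$, which is the first claimed formula. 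Substituting this $\mu_1$ into the $e_2$ and $\det$ identities leaves a linear system of two equations in the two unknowns $X := \mu_2\mu_4$ and $W := \mu_3\mu_5$. Solving it (the coefficient matrix being $\begin{bmatrix} 1 & 1\\ q^{n+\beta} & q^{n+\alpha}\end{bmatrix}$, whose determinant $q^{n+\alpha}-q^{n+\beta}$ is nonzero since $\alpha-\beta\notin\mathbb Z$) produces rational expressions in $q^\alpha, q^\beta, q^n$; I would then factor them to match the stated closed forms $\mu_2\mu_4 = \frac{q^{\alpha-n}(q^n-1)(q^{\alpha+n}-1)(q^{\alpha+n}-q^\beta)}{q^\beta - q^\alpha}$ and symmetrically for $\mu_3\mu_5$. (One should double-check consistency: the affine-in-$z$ determinant comparison and the $e_2$/$\det$ pair are not independent, so the system is genuinely determined.)

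I expect the main obstacle to be purely the bookkeeping of the final factorisation — verifying that the rational function coming out of Cramer's rule really collapses to the advertised product of four linear factors divided by $q^\beta - q^\alpha$. This is routine but error-prone; a clean way to do it is to observe that $\mu_2\mu_4$ must vanish whenever $n=0$ (since then $Y_0$ and the whole construction degenerate, or more concretely because the only consistent solution has $\mu_4 = 0$ at $n=0$), and whenever $q^{\alpha+n}=1$, and should have a factor reflecting the $q^{\alpha+n}-q^\beta$ coincidence coming from the $e_2$ equation; matching leading behaviour in $q^n$ then pins down the prefactor $q^{\alpha-n}/(q^\beta-q^\alpha)$. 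A secondary subtlety worth a sentence in the write-up is that we only ever determine the products $\mu_2\mu_4$ and $\mu_3\mu_5$, not the individual factors — this is expected, since $Y_n(z)$ is only determined up to the normalisation already fixed in \eqref{RHP sol 1}, and the off-diagonal gauge freedom in $D_n$ is exactly what these products are invariant under.
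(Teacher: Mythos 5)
Your proposal is correct and essentially the paper's own argument: the paper likewise uses the eigenvalue lemma to extract three linear equations in $\mu_1$, $\mu_2\mu_4$, $\mu_3\mu_5$ (there by imposing $\det(D_n(0)-\lambda I)=0$ at $\lambda=1,q^\alpha,q^\beta$, which for distinct eigenvalues is just a reorganisation of your trace/$e_2$/determinant system) and solves them. Your extra determinant identity $\det D_n(z)=(1-z)q^{\alpha+\beta}$ is a harmless redundancy, as you note, and the Cramer's-rule solution does factor into the stated closed forms.
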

\begin{proof}
    This follows by direct computation. Calculating det$(D_n(0)-\lambda)=0$ for $\lambda = 1, q^\alpha, q^\beta$ results in three linearly independent equations for $\mu_1$, $\mu_2\mu_4$ and $\mu_3\mu_5$. Solving this system of equations gives the desired solution.
\end{proof}

Having determined the entries of $D_n(z)$ we can use Equation \eqref{the main diff} and Lemma \ref{coeff determination} to deduce a third order $q$-difference equation satisfied by the entries of $Y_n(z)$. Pre-empting our discussion in the next section we will label our independent variable as $t$ instead of $z$. It will be useful to think of $t$ as $t=zq^{-2n}$. That is, $t$ is of order one when $z$ is of order $q^{2n}$. Re-writing the first order, $3\times3$ matrix $q$-difference equation in Equation \eqref{the main diff} as a third order $q$-difference equation for the $(1,1)$ entry: $Y_{1,1}(tq^{2n})(q^{2n+1}t;q)_\infty$, we find
\begin{eqnarray}
y(t) &=& \left[q^n(q^\alpha+q^\beta) + (\mu_1-qt)\right]y(qt) \nonumber \\
&+& \left[(\mu_2\mu_4+\mu_3\mu_5) - q^n\left((q^\alpha+q^\beta)(\mu_1-q^{2}t) +q^{n+\alpha+\beta}\right)\right]y(q^2t) \nonumber \\
&+& \left[ q^{\alpha+\beta}-q^{3+\alpha+\beta+2n}t\right]y(q^3t). \label{1st diff v2}
\end{eqnarray}
Applying Lemma \ref{coeff determination} it follows that $Y_{1,1}(tq^{2n})(q^{2n+1}t;q)_\infty$ satisfies the difference equation
\begin{eqnarray}
y(t) &=& \left[ 1+q^\alpha + q^\beta-qt\right]y(qt) \nonumber \\
&+& \left[-(q^\alpha + q^\beta + q^{\alpha+\beta}) + (q^\alpha+q^\beta)q^{n+2}t\right]y(q^2t) \nonumber \\
&+& \left[ q^{\alpha+\beta}-q^{3+\alpha+\beta+2n}t\right]y(q^3t). \nonumber
\end{eqnarray}
Similarly, one can determine difference equations satisfied by $Y_{2,1}$ and $Y_{3,1}$. These results are collected in the following proposition.
\begin{proposition}
The entries of the first column of the solution $Y_n$ to RHP I, satisfy the following $q$-difference equations:\\
$Y_{1,1}(tq^{2n})(q^{2n+1}t;q)_\infty$ satisfies the difference equation
\begin{eqnarray}
y(t) &=& \left[ 1+q^\alpha + q^\beta-qt\right]y(qt) \nonumber \\
&+& \left[-(q^\alpha + q^\beta + q^{\alpha+\beta}) + (q^\alpha+q^\beta)q^{n+2}t\right]y(q^2t) \nonumber \\
&+& \left[ q^{\alpha+\beta}-q^{3+\alpha+\beta+2n}t\right]y(q^3t).
\label{1st diff v3}
\end{eqnarray}
$Y_{2,1}(tq^{2n})(q^{2n+1}t;q)_\infty$  satisfies the difference equation
\begin{eqnarray}
y(t) &=& \left[ 1+q^\alpha + q^\beta-q^{2}t \right]y(qt) \nonumber \\
&+& \left[-(q^\alpha + q^\beta + q^{\alpha+\beta}) + \left(q^\alpha + q^{\beta +1}\right)q^{n+2}t \right]y(q^2t) \nonumber \\
&+& \left[ q^{\alpha+\beta}-q^{3+\alpha+\beta+2n}t\right]y(q^3t). \label{2nd diff v3}
\end{eqnarray}
$Y_{3,1}(tq^{2n})(q^{2n+1}t;q)_\infty$  satisfies the difference equation
\begin{eqnarray}
y(t) &=& \left[ 1+q^\alpha + q^\beta-q^{2}t \right]y(qt) \nonumber \\
&+& \left[-(q^\alpha + q^\beta + q^{\alpha+\beta}) + \left(q^{\alpha+1} + q^{\beta}\right)q^{n+2}t \right]y(q^2t) \nonumber \\
&+& \left[ q^{\alpha+\beta}-q^{3+\alpha+\beta+2n}t\right]y(q^3t). \label{3rd diff v3}
\end{eqnarray}
\end{proposition}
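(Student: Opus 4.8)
The plan is to derive all three $q$-difference equations from the first column of the matrix equation \eqref{the main diff} by eliminating, in each case, the two unwanted components; the equation for $Y_{1,1}$ is already obtained in the discussion immediately preceding the Proposition, so the genuinely new content is \eqref{2nd diff v3} and \eqref{3rd diff v3}, which follow by the same mechanism. Since the first column $(1,0,0)^\top$ of the jump matrix in \eqref{jump cond} is trivial, the vector $Y_n^{(1)}(z)=(Y_{1,1},Y_{2,1},Y_{3,1})^\top$ is entire (it has polynomial entries), and multiplying the first column of \eqref{the main diff}, namely $(1-z)Y_n^{(1)}(q^{-1}z)=D_n(z)Y_n^{(1)}(z)$, by $(qz;q)_\infty$ and using $(1-z)(qz;q)_\infty=(z;q)_\infty$ turns it into the system $U_n(q^{-1}z)=D_n(z)U_n(z)$ for $U_n:=(qz;q)_\infty Y_n^{(1)}=(u_1,u_2,u_3)^\top$. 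Writing $\tau$ for the operator $f(z)\mapsto f(q^{-1}z)$ and inserting \eqref{d def}, this reads
\begin{align*}
(\tau-\mu_1+q^{-2n}z)\,u_1 &= \mu_2 u_2+\mu_3 u_3,\\
(\tau-q^{n+\alpha})\,u_2 &= \mu_4 u_1,\\
(\tau-q^{n+\beta})\,u_3 &= \mu_5 u_1.
\end{align*}

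To obtain the equation for $u_1=(qz;q)_\infty Y_{1,1}$, substitute $u_2=\mu_4(\tau-q^{n+\alpha})^{-1}u_1$ and $u_3=\mu_5(\tau-q^{n+\beta})^{-1}u_1$ from the last two lines into the first and clear the two denominators; this is legitimate because $(\tau-q^{n+\alpha})$ and $(\tau-q^{n+\beta})$ have constant coefficients, hence commute with one another and with scalars, and it produces the third-order $q$-difference operator
\[
(\tau-q^{n+\alpha})(\tau-q^{n+\beta})(\tau-\mu_1+q^{-2n}z)-\mu_2\mu_4(\tau-q^{n+\beta})-\mu_3\mu_5(\tau-q^{n+\alpha})
\]
annihilating $u_1$. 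Normal-ordering this operator into the form $\sum_k a_k(z)\tau^k$ via the rule $\tau\,(z\,\cdot)=q^{-1}(z\,\cdot)\,\tau$, setting $z=tq^{2n}$ (so that $u_1(tq^{2n})=Y_{1,1}(tq^{2n})(q^{2n+1}t;q)_\infty$), replacing $t\mapsto q^{3}t$ to convert the backward shifts into the forward shifts $y(qt),y(q^2t),y(q^3t)$, and finally substituting the values of $\mu_1$, $\mu_2\mu_4$, $\mu_3\mu_5$ from Lemma \ref{coeff determination} reproduces \eqref{1st diff v2} and then \eqref{1st diff v3}.

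For $u_2=(qz;q)_\infty Y_{2,1}$ one eliminates $u_1$ and $u_3$ instead: the middle line gives $u_1=\mu_4^{-1}(\tau-q^{n+\alpha})u_2$ with no inversion (here $\mu_4\neq0$, since for $n\geq1$ Lemma \ref{coeff determination} shows $\mu_2\mu_4\neq0$ using $q^n\neq1$, $q^{n+\alpha}\neq1$ for $\alpha\notin\mathbb Z$, and $q^{n+\alpha}\neq q^\beta$ for $\alpha-\beta\notin\mathbb Z$; the case $n=0$ is trivial and also follows by continuity in $\alpha$). Substituting this into the first and third lines and eliminating $u_3$ between them yields the operator $(\tau-q^{n+\beta})(\tau-\mu_1+q^{-2n}z)(\tau-q^{n+\alpha})-\mu_2\mu_4(\tau-q^{n+\beta})-\mu_3\mu_5(\tau-q^{n+\alpha})$ annihilating $u_2$, which after the same normal-ordering and substitutions gives \eqref{2nd diff v3}. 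Equation \eqref{3rd diff v3} then follows from \eqref{2nd diff v3} by symmetry: the displayed system is invariant under the simultaneous interchange $u_2\leftrightarrow u_3$, $\mu_2\leftrightarrow\mu_3$, $\mu_4\leftrightarrow\mu_5$, $\alpha\leftrightarrow\beta$, and by Lemma \ref{coeff determination} this interchange also swaps $\mu_2\mu_4\leftrightarrow\mu_3\mu_5$ while fixing $\mu_1$, so exchanging $\alpha$ and $\beta$ in \eqref{2nd diff v3} gives exactly the equation satisfied by $Y_{3,1}$ (alternatively, one repeats the elimination, now removing $u_1$ and $u_2$).

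The only laborious part is the bookkeeping in the elimination: respecting $\tau z=q^{-1}z\tau$ when normal-ordering the three third-order operators, and checking that substituting the rational expressions of Lemma \ref{coeff determination} collapses the coefficients to exactly the polynomials in $t$ displayed in \eqref{1st diff v3}, \eqref{2nd diff v3}, \eqref{3rd diff v3} — in particular that the $y(q^3t)$-coefficient is the common $q^{\alpha+\beta}-q^{3+\alpha+\beta+2n}t$ in all three cases. Carrying the computation out in the Ore algebra generated by $z$ and $\tau$, and using the $\alpha\leftrightarrow\beta$ symmetry so that only two of the three equations must be computed from scratch, keeps this manageable; there is no conceptual obstacle.
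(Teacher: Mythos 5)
Your proposal is correct and follows essentially the same route as the paper: the paper likewise rewrites the first column of the Lax equation \eqref{the main diff} as a third-order scalar $q$-difference equation (Equation \eqref{1st diff v2}) and then substitutes the values of $\mu_1$, $\mu_2\mu_4$, $\mu_3\mu_5$ from Lemma \ref{coeff determination}, treating the $Y_{2,1}$ and $Y_{3,1}$ cases "similarly"; your Ore-algebra elimination, the evaluation at $z=q^{2n+3}t$, and the $\alpha\leftrightarrow\beta$ symmetry just make that sketch explicit. The coefficients you obtain indeed collapse to those in \eqref{1st diff v3}--\eqref{3rd diff v3}, so no gap.
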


\section{Determining $P_{n,n}(0)$ in the limit $n\to\infty$}\label{Sect 3}

We will focus the following discussion on the function $Y_{1,1}(tq^{2n})(q^{2n+1}t;q)_\infty = P_{n,n}(tq^{2n})(q^{2n+1}t;q)_\infty$ (recall that $Y_{1,1}(t)$ is the $(1,1)$ entry of $Y_n(t)$). Consider the corresponding $q$-difference equation, Equation \eqref{1st diff v3}. By substituting the power series $y(t) = \sum_{k=0}^\infty c_kt^k$ into Equation \eqref{1st diff v3} one readily finds that there is a unique (up to a scalar factor) solution to Equation \eqref{1st diff v3} which is analytic in a neighbourhood about $t=0$. The coefficients $c_k$ of this solution satisfy the recurrence relation
 \begin{equation}\nonumber
     c_k = \frac{(-q^k)(1-q^{n+\alpha+k})(1-q^{n+\beta+k})}{(1-q^k)(1-q^{k+\alpha})(1-q^{k+\beta})}c_{k-1}.
 \end{equation}
Let $u_n(t)$ be this analytic solution normalised such that $u_n(0)=1$. Thus,
\begin{equation}\label{finding F1}
    u_n(t) = \sum_{k=0}^\infty \frac{(-1)^{k}q^{k(k+1)/2} (q^{n+\alpha+1};q)_k (q^{n+\beta+1};q)_k }{(q;q)_k(q^{\alpha+1};q)_k(q^{\beta+1};q)_k}t^k.
\end{equation}
Note that $u_n(t)$ is an entire function.  Moreover, the power series of $u_n(t)$ immediately implies that for finite $t$, $u_n(t)$ approaches a non-zero limiting function, $u_\infty(t)$ as $n\to\infty$. \\
 
Under our normalisation it is clear that $u_n(t)= P_{n,n}(tq^{2n})(q^{2n+1}t;q)_\infty/P_{n,n}(0)$. Our goal is to find $P_{n,n}(0)$, focusing on its limit as $n\to\infty$. Define $v_n(t) = u_n(t)/g(t)$, where $g(t) = (qt;q)_\infty(t^{-1};q)_\infty$, satisfies the $q$-difference equation 
 \begin{equation}\label{g diff}
  g(qt) = -q^{-1}t^{-1}g(t).
\end{equation}
 By direct computation it follows $v_n(t)$ satisfies the $q$-difference equation
\begin{multline}\label{f mat diff}
   \begin{bmatrix}
   v_n(q^{-3}t)\\
   v_n(q^{-2}t)\\
   v_n(q^{-1}t)
 \end{bmatrix}= \Bigg(\begin{bmatrix}
   1 & 0 &0\\
   1&0&0 \\
   0&1&0
   \end{bmatrix}
   +t^{-1}\begin{bmatrix}
   (1+q^\alpha+q^\beta)q^2 & (q^\alpha+q^\beta)q^{n+2} &0\\
   0&0&0 \\
   0&0&0
   \end{bmatrix}
   \\
   +t^{-2}\begin{bmatrix}
   0 & -q^3(q^\alpha+q^\beta+q^{\alpha+\beta}) &q^{\alpha+\beta+2n+3}\\
   0&0&0 \\
   0&0&0
   \end{bmatrix}
   +t^{-3}\begin{bmatrix}
   0 & 0 &q^{\alpha+\beta+3}\\
   0&0&0 \\
   0&0&0
   \end{bmatrix} \Bigg)
 \begin{bmatrix}
   v_n(q^{-2}t)\\
   v_n(q^{-1}t)\\
   v_n(t)
 \end{bmatrix}.
 \end{multline}
This matrix expression allows us to write the vector $V(q^{-j}t) = [v_n(q^{-2-j}t),v_n(q^{-1-j}t),v_n(q^{-j}t)]^{T}$ as a Pochhammer symbol with matrix elements, which multiplies the vector $V(t)$. That is 
\begin{equation}\label{matrix poch}
    V(q^{-j}t) = \Big( \prod_{i=0}^j\left(A + q^{i-1}t^{-1}B + q^{2i-2}t^{-2}C + q^{3i-2}t^{-3}D\right)\Big)V(t),  
\end{equation}
where the matrices $A,B,C$ and $D$ are given by Equation \eqref{f mat diff}. As 
\[ A^i = \begin{bmatrix}
   1 & 0 &0\\
   1&0&0 \\
   1&0&0
   \end{bmatrix} ,\]
for $i>1$ (note the maximum eigenvalue of $A$ is $\lambda=1$), it follows that the product in Equation \eqref{matrix poch} converges as $j\to\infty$. Consider Equation \eqref{matrix poch} in the case when $t$ is large. In particular, when $t$ is such that the entries of $t^{-1}B$, $t^{-2}C$ and $t^{-3}D$ are much less than $\frac{1}{1-q}$, say by a factor of one hundred. This occurs for all $t$ with modulus greater than some value $t_c$, where such a choice can be made independent of $n$. As will be explained in the remark below, for this choice of $t$ it follows that if $v_n(q^{-2}t)$ is of the same order as $v_n(q^{-1}t)$ and $v_n(t)$, then $v_n(q^{-j}t)$ is of the same order as $v_n(q^{-2}t)$ for all $j>3$. Hence, $v(q^{-j}t)$ approaches a non-zero constant as $j\to\infty$.
\begin{remark}\label{remark 4.1}
  That $v_n(q^{-j}t)$ is of the same order as $v_n(q^{-2}t)$ for all $j>3$ is most easily seen if we first make the simplification that $V(t)$ is a scalar $($let us call this scalar $\nu(t))$. For the sake of simplicity, we take the case of three iterations in $j$, we find
\begin{eqnarray*}
    \nu(q^{-3}t) &=& (A+\epsilon)(A+q\epsilon) (A+q^2\epsilon)\nu(t) ,\\
    &=& \Big(A^3 + \epsilon A^2(1+q+q^2) + A\epsilon^2q(1+q+q^2) + q^3\epsilon^3\Big)\nu(t).
\end{eqnarray*}
If $A=1$ and $\epsilon$ is much less then $\frac{1}{1-q}$, say by a factor of one hundred, then it follows $\nu(q^{-3}t)$ is approximately $\nu(t)$. One can easily extend this to any $j>3$. The matrix case follows similarly, but the notation is more cluttered due to the lack of commutativity. Crucially, in the matrix case the elements of $V(t)$ can be of different order i.e. $\mathcal{O}(\epsilon)$. If all the elements of $V(t)$ are of the same order then the argument proceeds exactly as in the scalar case. 
\end{remark}

As we have established, if $v_n(q^{-2}t)$ is of the same order as $v_n(q^{-1}t)$ and $v_n(t)$ for $|t|>t_c$, then it is true that $v_n(q^{-j}t)$ is of the same order as $v_n(t)$ as $j\to\infty$. This observation allows us to deduce a number of facts about $v_n(t)$. \\

Recall that $u_n(t) = P_{n,n}(tq^{2n})w(tq^{2n})/P_{n,n}(0)$ and $v_n(t)=u_n(t)/g(t)$. Using induction on Equation \eqref{g diff} we find
\[ g(q^{-2n}t) = t^{2n}q^{-n(2n-1)}g(t) ,\]
which in turn allows us to determine 
\begin{eqnarray}
    v_n(q^{-2n}t) &=& \frac{u_n(q^{-2n}t)}{g(q^{-2n}t)}\nonumber \\
    &=& \frac{P_{n,n}(t)w(t)}{t^{2n}q^{-n(2n-1)}g(t)P_{n,n}(0)}\nonumber \\
    &=& q^{n(2n-1)}\frac{P_{n,n}(t)}{t^{2n}(t^{-1};q)_\infty P_{n,n}(0)}\label{eq vnqn}
\end{eqnarray}
Thus, $v_n(t)$ approaches a non-zero constant as $t\to\infty$. Hence, it is required that the poles of $v_n(t) = u_n(t)/g(t)$ vanish as $t\to\infty$. The poles of $u_n(t)/g(t)$ occur at the zeros of $g(t)$ which are at $t = q^{k}$ for $k\in\mathbb{Z}$. Hence, we require that near such points
\begin{equation}\label{vanishing f} v_n(q^{-2}q^{-k}) = \mathcal{O}\Big(\max\big(q^{k}q^{n}v_n(q^{-1}q^{-k}), q^{2k}v_n(q^{-1}q^{-k}), q^{2k}q^{2n}v_n(q^{-k}),q^{3k}v_n(q^{-k})\big)\Big) ,
\end{equation}
for all $k$ such that $q^{-k}>t_c$ (recall $k\in \mathbb{N}$). If this were not the case then the $v_n(q^{-2}q^{-k})$ term would remain dominant and by Equation \eqref{matrix poch} the pole would persist for all $q^{-k}$ as $k\to\infty$. 
\begin{remark}\label{un 0s}
Equation \eqref{vanishing f} and the identity $u_n(t) = v_n(t)g(t)$ tell us that
    \begin{equation}\nonumber u_n(q^{-2}q^{-k}) = \mathcal{O}\Big(\max\big(q^{n}u_n(q^{-1}q^{-k}), q^{k}u_n(q^{-1}q^{-k}), q^{2n}u_n(q^{-k}),q^{k}u_n(q^{-k})\big)\Big) ,
\end{equation}
for all $k$ such that $q^{-k}>t_c$, which in turn tells us that $P_{n,n}(q^{2n-k})$ is rapidly vanishing as $k$ increases.  
\end{remark}

We have established that $v_n(t)$ approaches a non-zero constant as $t\to\infty$. Let us call this constant $\Omega_n^{(1)}$. It follows that
\[ v_n(t) = \Omega_n^{(1)} + \sum_{j=-2n}^{\infty} \frac{\text{Res}\left( \frac{u_n(q^j)}{g(q^j)} \right)}{t-q^{j}} ,\]
where, as we detailed in Equation \eqref{vanishing f} and Remark \ref{un 0s}, the residue of $\frac{u_n(q^j)}{g(q^j)} = v_n(q^j)$ is  rapidly vanishing as $j \to-\infty$. Note that
\begin{equation}\nonumber
    \Omega_n^{(1)} = \lim_{|t|\to\infty} v_{n}(t) = \lim_{|t|\to\infty} \frac{u_n(t)}{g(t)} ,
\end{equation}
where we take this limit away from $t=q^k$ for $k\in\mathbb{Z}$. By the arguments presented in Remark \ref{remark 4.1} we know that this limit exists and is non-zero. The constant $\Omega_n^{(1)}$ can readily be calculated to as high a degree of accuracy as desired using Equation \eqref{f mat diff} and the power series representation of $u_n(t)$. Furthermore, $u_n(t)$ approaches a limiting function $u_\infty(t)$. This function is an entire function and hence increases along a ray as $t$ increases. It follows that along this ray $u_\infty(t)$ (and $u_n(t)$ for large enough $n$) violates the condition in Remark \ref{un 0s}. Hence, $\Omega_n^{(1)}$ approaches the non-zero constant, 
\begin{equation}\label{omega 1 def}
    \Omega^{(1)}_\infty = \lim_{|t|\to\infty}\frac{u_\infty(t)}{g(t)},
\end{equation}
as $n\to\infty$. See Figure \ref{u convergence} for an illustration of how to determine $\Omega^{(1)}_\infty$. Applying Equation \eqref{eq vnqn} and \cite[Thereom 4]{postelmans2005multiple} we find
\begin{equation}\label{final Pn0}
    \lim_{n\to\infty} q^{-n(2n-1)}P_{n,n}(0) = \frac{1}{\Omega^{(1)}_\infty}.
\end{equation}
\begin{remark}
    Equation \eqref{final Pn0} tells us the magnitude of $P_{n,n}(0)$ as $n\to\infty$. However, as $P_{n,n}(q^{2n}t)(q^{2n-1}t;q)_\infty$ is a solution to Equation \eqref{1st diff v3} we can conclude that for any finite $t$ \[\lim_{n\to\infty} q^{-n(2n-1)}P_{n,n}(q^{2n}t) = F_1(t) ,\] 
    where $F_1(t)$ is an entire function of $t$ which is independent of $n$. Furthermore, looking at the limit $n\to\infty$ of Equation \eqref{finding F1} we determine that
    \[ F_1(t) = \frac{1}{\Omega^{(1)}_\infty}\sum_{j=0}^\infty \frac{(-1)^jq^{j(j+1)/2}}{(q^{\alpha+1};q)_j(q^{\beta+1};q)_j(q;q)_j}t^j.\]
\end{remark}
The results of this section are summarised in the lemma below. Although we only discussed the behaviour of $P_{n,n}(q^{2n}t)$ in this section, repeating the presented arguments yields similar insights for $P_{n,n-1}(q^{2n}t)$ and $P_{n-1,n}(q^{2n}t)$, the $(2,1)$ and $(3,1)$ entries of $Y_n$.

\begin{lemma}\label{Section 3 lemma}
    The monic multiple $q$-orthogonal polynomials $P_{n,n}(z)$, $P_{n-1,n}(z)$ and $P_{n,n-1}(z)$ defined in Equation \eqref{ortho 1} display the following behaviours.
    \begin{enumerate}
    \item $\lim_{n\to\infty} q^{n(1-2n)}P_{n,n}(q^{2n}z) =  F_1(z)$, where 
     \[F_1(z)=\frac{1}{\Omega^{(1)}_\infty}\sum_{j=0}^\infty \frac{(-1)^jq^{j(j+1)/2}}{(q^{\alpha+1};q)_j(q^{\beta+1};q)_j(q;q)_j}z^j,\]
     and $\Omega^{(1)}_\infty$ is a non-zero constant defined in Equation \eqref{omega 1 def}.
        \item For $k\in\mathbb{N}$ larger than some critical value $k_c$ (which is independent of $n$) it holds that
        \begin{multline}\nonumber
            P_{n,n}(q^{2n-2}q^{-k})(q^{2n+1}q^{-k};q)_\infty = \\ \mathcal{O}\Big(\max\big(q^{n}P_{n,n}(q^{-1}q^{2n-k}), q^{k}P_{n,n}(q^{-1}q^{2n-k}), q^{2n}P_{n,n}(q^{2n-k}),q^{k}P_{n,n}(q^{2n-k})\big)\Big).
        \end{multline}
        Note that for $k<2n+1$, $(q;q)_\infty<(q^{2n+1}q^{-k};q)_\infty<1$.
        \item $\lim_{n\to\infty} q^{n(3-2n)}P_{n,n-1}(q^{2n}z) = F_2(z)$, where $F_2(z)=\lambda_1F_1(qz)$, and $\lambda_1$ is a non-zero constant.
        \item For $k\in\mathbb{N}$ larger than some critical value $k_c$ (which is independent of $n$) it holds that
        \begin{multline}\nonumber 
            P_{n,n-1}(q^{2n-2}q^{-k})(q^{2n+1}q^{-k};q)_\infty = \\ \mathcal{O}\Big(\max\big(q^{n}P_{n,n-1}(q^{-1}q^{2n-k}), q^{k}P_{n,n-1}(q^{-1}q^{2n-k}), q^{2n}P_{n,n-1}(q^{2n-k}),q^{k}P_{n,n}(q^{2n-k})\big)\Big).
        \end{multline}
        \item $\lim_{n\to\infty} q^{n(3-2n)}P_{n-1,n}(q^{2n}z) = \lambda_3 F_2(z)$. Note that up to linear scaling both $P_{n,n-1}(zq^{2n})$ and $P_{n-1,n}(zq^{2n})$ approach the same function $F_2(z)$.
        \item For $k\in\mathbb{N}$ larger than some critical value $k_c$ (which is independent of $n$) it holds that
        \begin{multline}\nonumber
            P_{n-1,n}(q^{2n-2}q^{-k})(q^{2n+1}q^{-k};q)_\infty =  \\\mathcal{O}\Big(\max\big(q^{n}P_{n,n-1}(q^{-1}q^{2n-k}), q^{k}P_{n,n-1}(q^{-1}q^{2n-k}), q^{2n}P_{n,n-1}(q^{2n-k}),q^{k}P_{n,n}(q^{2n-k})\big)\Big).
        \end{multline}
    \end{enumerate}
\end{lemma}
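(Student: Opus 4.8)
The plan is to collect the two complementary descriptions of the first column of $Y_n$ developed in this section and to verify that the argument carried out for the $(1,1)$ entry transfers verbatim to the $(2,1)$ and $(3,1)$ entries. Parts (1), (3), (5) will come from the convergence of the suitably normalised analytic-at-$0$ solutions of the scalar $q$-difference equations \eqref{1st diff v3}, \eqref{2nd diff v3}, \eqref{3rd diff v3}, together with the magnitudes of $P_{n,n}(0)$, $P_{n,n-1}(0)$, $P_{n-1,n}(0)$ as $n\to\infty$; parts (2), (4), (6) will come from the residue-vanishing condition forced by the matrix $q$-difference equation \eqref{f mat diff}.

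For (1) and (2) there is essentially nothing to do beyond recording the discussion above. The normalisation $u_n(t)=P_{n,n}(tq^{2n})(q^{2n+1}t;q)_\infty/P_{n,n}(0)$ together with the explicit series \eqref{finding F1} shows $u_n\to u_\infty$ uniformly on every disc $|t|<R$ (each coefficient is a product of factors $(q^{n+\alpha+1};q)_k$ and $(q^{n+\beta+1};q)_k$ tending to $1$, and for $n$ large the coefficients are dominated uniformly by those of an entire function); combining this with \eqref{final Pn0} and with $(q^{2n+1}t;q)_\infty\to 1$ for fixed $t$ gives (1). Substituting $v_n=u_n/g$ and the same normalisation into \eqref{vanishing f}, evaluating at $t=q^{-2}q^{-k}$ and cancelling the $k$-independent factor $1/P_{n,n}(0)$, gives (2); the Pochhammer factor is harmless since $(q;q)_\infty<(q^{2n+1}q^{-k};q)_\infty<1$ for $k<2n+1$.

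For (3)--(6) I would set up the exact analogues for the $(2,1)$ and $(3,1)$ entries, which by \eqref{RHP sol 1} are $P_{n-1,n}(z)/\gamma_1^{(n-1,n)}$ and $P_{n,n-1}(z)/\gamma_2^{(n,n-1)}$ and which, after multiplication by $(qz;q)_\infty$, are entire. Each of \eqref{2nd diff v3}, \eqref{3rd diff v3} has a unique analytic-at-$0$ solution up to scale, so substituting a power series reads off a coefficient recurrence. The only change from the $Y_{1,1}$ recurrence that survives the limit $n\to\infty$ is the replacement of $-qt$ by $-q^2t$ in the first bracket (the remaining $n$-dependent corrections all carry a power $q^{n+2}$ or $q^{2n}$ and vanish), and this multiplies the recurrence numerator by one extra factor $q$ per step; hence the limiting generating function equals $F_1(qz)$ up to a constant, giving $F_2(z)=\lambda_1F_1(qz)$ and, for $P_{n-1,n}$, an additional scalar $\lambda_3$. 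The normalising powers $q^{n(3-2n)}$ reflect that $P_{n,n-1}(0)$ and $P_{n-1,n}(0)$ are of order $q^{n(2n-3)}$ rather than $q^{n(2n-1)}$---an extra factor $q^{2n}$ coming from the extra lattice zero carried by $P_{n,n}$---and this can be extracted from the four-term recurrence relating $\gamma_1^{(n,n)}$, $\gamma_1^{(n-1,n)}$, $\gamma_2^{(n,n-1)}$ together with \cite[Theorem 4]{postelmans2005multiple}, or obtained directly by repeating the $v_n$-argument. Estimates (4) and (6) follow from \eqref{f mat diff}, which is common to all three entries of the first column, exactly as (2) did.

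The main obstacle is the residue-vanishing step behind (2), (4) and (6). One knows from \eqref{eq vnqn} that $v_n(t)=u_n(t)/g(t)$ has a finite nonzero limit as $|t|\to\infty$ along rays avoiding the lattice $\{q^k\}$, yet $v_n$ has poles at each $t=q^{-k}$; the content is to show that these residues must decay, with the precise rate in \eqref{vanishing f}. The tool is the matrix--Pochhammer estimate: rewrite \eqref{f mat diff} as the product \eqref{matrix poch}, note that $A^j$ for $j\ge 2$ is the rank-one map sending every vector to its top entry repeated in all three slots, and that the perturbations $q^{i-1}t^{-1}B+q^{2i-2}t^{-2}C+q^{3i-2}t^{-3}D$ are uniformly small---by a fixed factor, say one hundredth of $(1-q)^{-1}$---for all $|t|>t_c$, with $t_c$ independent of $n$ because every $n$-dependent entry of $B$, $C$, $D$ carries a power of $q$ that decreases with $n$. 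If at $t=q^{-2}q^{-k}$ the pole-carrying component $v_n(q^{-2}t)$ were of strictly larger order than $v_n(q^{-1}t)$ and $v_n(t)$, that disparity would be propagated by \eqref{matrix poch} to all $j\to\infty$, contradicting the existence of the limit; quantifying how large the other two components must be to avoid this produces the fourfold $\max$ on the right of \eqref{vanishing f}, hence of (2), (4), (6). Controlling this non-commutative product uniformly in $n$ is the delicate point; everything else is bookkeeping with $q$-Pochhammer recurrences.
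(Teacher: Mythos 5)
Your proposal follows the paper's own route essentially verbatim: the unique analytic-at-zero solutions of \eqref{1st diff v3}--\eqref{3rd diff v3} normalised by the value at the origin, the passage to $v_n=u_n/g$, the matrix--Pochhammer product \eqref{matrix poch} forcing the residue-decay condition \eqref{vanishing f}, and the evaluation at $t\sim q^{-2n}$ combined with \eqref{final Pn0}, with the shift $-qt\mapsto -q^{2}t$ in the limiting equation accounting for $F_2(z)=\lambda_1F_1(qz)$. The only slip is cosmetic: \eqref{f mat diff} is derived from \eqref{1st diff v3} alone, so for parts (4) and (6) you must use its analogues built from \eqref{2nd diff v3} and \eqref{3rd diff v3} (which differ only in uniformly small lower-order coefficients), which is exactly what the paper's closing remark about ``repeating the presented arguments'' intends.
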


\begin{figure}[h]
    \centering
    \includegraphics[width=11cm]{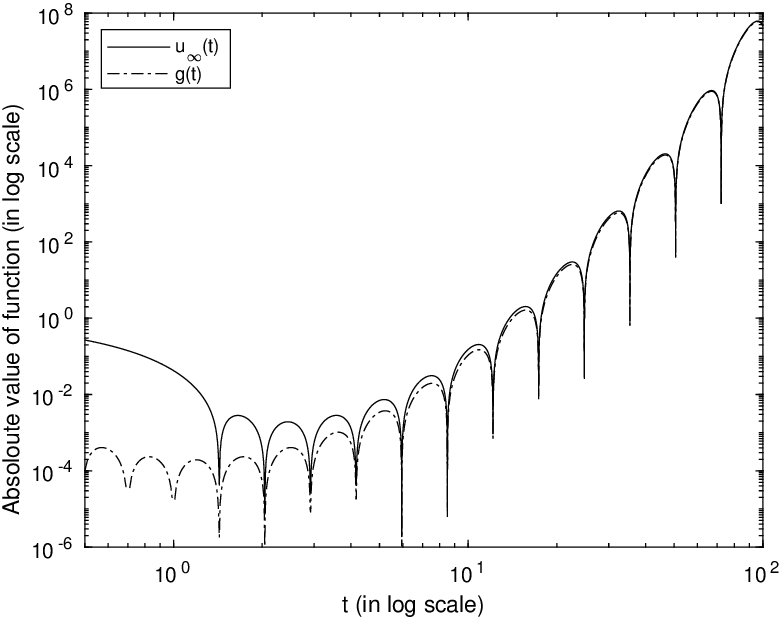}
    \caption{Illustrative example of $u_\infty(t)$ converging to $g(t)$ with the parameters $q=0.7$, $\alpha = 0.4$ and $\beta=0.6$. Indeed, in this case we find that
    $ \Omega_\infty^{(1)} = \lim_{t\to\infty}\frac{u_\infty(t)}{g(t)} = 1.$ Note that the zeros of $u_\infty(t)$ are slightly different to the zeros of $g(t)$, which is difficult to see in the figure. As $k$ becomes large the zeros of $u_{\infty}(t)$ approach the zeros of $g(t)$ which occur at $q^{-k}$ for $k\in\mathbb{Z}$. Indeed, Lemma \ref{Section 3 lemma} shows that $u_{\infty}(q^{-k})$ becomes vanishingly small as $k\to\infty$.}
    \label{u convergence}
\end{figure}

\section{Determining $\gamma_1^{(n,n)}$ in the limit $n\to\infty$}\label{Sect 4}
We will now use a similar approach to Section \ref{Sect 3} to determine the magnitude of $\gamma_1^{(n,n)}$ (defined in Equation \eqref{gamma def}) for multiple little $q$-Jacobi polynomials of the first kind as $n\to\infty$. We will study the function
\[ y_n(z) = \frac{1}{\gamma_1^{(n,n)}}\sum_{k=0}^\infty \frac{P_{n,n}(q^k)w_1(q^k)q^k}{z-q^k},\]
at $y_n(q^{2n}z)$ as $n\to\infty$. This will in turn allow us to deduce the magnitude of $\gamma_1^{(n,n)}$ as $n\to\infty$. Note that one has to avoid the points $z=q^k$ for $k\in\mathbb{Z}_{\geq 0}$, as $y_n(z)$ has singularities at these locations. \\

Applying Equation \eqref{the main diff} and \eqref{1st diff v3} we find that $y_n(z)$ satisfies the third order $q$-difference equation
\begin{eqnarray}
q^{3\alpha}y_n(z) &=& q^{2\alpha}\left[ 1+q^\alpha + q^\beta-q^{1-2n}z\right]y_n(qz) \nonumber \\
&+& q^{\alpha}\left[-(q^\alpha + q^\beta + q^{\alpha+\beta}) + (q^\alpha+q^\beta)q^{2-n}z\right]y_n(q^2z) \nonumber \\
&+& \left[ q^{\alpha+\beta}-q^{3+\alpha+\beta}z\right]y_n(q^3z), 
\end{eqnarray}
By the orthogonality condition, Equation \eqref{ortho 1}, it follows that $\lim_{z\to\infty} y_n(z)z^{n} = z^{-1}$. Defining $u_n(z) = y_n(z)z^{n}$ we deduce that $u_n(z)$ satisfies the $q$-difference equation
\begin{eqnarray}
q^{2\alpha-\beta}q^{3n}u_n(z) &=& q^{\alpha-\beta}\left[ q^{2n}(1+q^{\alpha} + q^{\beta})-qz\right]u_n(qz) \nonumber \\
&+& q^{\alpha}\left[-q^{n}(q^{-\alpha} + q^{-\beta} + 1) + (q^{-\alpha}+q^{-\beta})q^{2}z\right]u_n(q^2z) \nonumber \\
&+& \left[ 1-q^{3}z\right]u_n(q^3z), \label{vn diff}
\end{eqnarray}
There is only one solution to Equation \eqref{vn diff} which can be written as a power series at $z=\infty$ with the required asymptotic behaviour $\lim_{z\to\infty} \varphi(z) = z^{-1}$, let us denote this solution as $\varphi(z)$. It follows that $\varphi(z) = u_n(z) = y_n(z)z^{n}$. Furthermore, writing $\varphi(z)$ ($=u_n(z)$) as a power series at infinity and studying the corresponding coefficients we determine that $u_n(z)$ approaches a limiting function $u_\infty(z)$ as $n\to\infty$ for $|z|>1$. \\

We shall now investigate the behaviour of $u_n(zq^{2n})$ for large $n$, which will in turn tell us about $y_n(q^{2n}z)$. First, let us re-write Equation \eqref{vn diff} as,
\begin{eqnarray}\label{vn diff v2}
u_n(z) &=& q^{\alpha}\frac{q^{n}(q^{-\alpha} + q^{-\beta} + 1) - (q^{-\alpha}+q^{-\beta})q^{-1}z}{1-z}u_n(q^{-1}z) \nonumber \\
&-& q^{\alpha-\beta}\frac{ q^{2n}(1+q^{\alpha} + q^{\beta})-q^{-2}z}{1-z}u_n(q^{-2}z) \nonumber \\
&+& \frac{q^{2\alpha-\beta}q^{3n}}{1-z}u_n(q^{-3}z). 
\end{eqnarray}
Define the function $v_n(z) = u_n(z)/f(z)$, where $f(z)$ satisfies the $q$-difference equation 
\begin{equation}\label{leading diff}
    f(q^{-1}z) = q^{\frac{\beta-\alpha}{2}}q^{\frac{3}{4}}z^{-\frac{1}{2}}f(z).
\end{equation}
We are choosing this function $f(z)$ as it will strip away the leading order behaviour of $u_n(z)$ as $z\to0$. Using Equations \eqref{vn diff v2} and \eqref{leading diff} we find that $v_n(z)$ satisfies the difference equation
\begin{eqnarray}\label{wn diff}
v_n(z) &=& \left(q^{\frac{\beta+\alpha}{2}+\frac{3}{4}}\right)\frac{(q^{2n}z^{-1})^{\frac{1}{2}}(q^{-\alpha} + q^{-\beta} + 1) - (q^{-\alpha}+q^{-\beta})q^{-1}z^{\frac{1}{2}}}{1-z}v_n(q^{-1}z) \nonumber \\
&-& \frac{ \left(q^{2n}z^{-1}\right)q^2(1+q^{\alpha} + q^{\beta})-1}{1-z}v_n(q^{-2}z) \nonumber \\
&+& \frac{\left(q^{2n}z^{-1}\right)^{\frac{3}{2}}q^{\frac{\alpha+\beta}{2}}q^{\frac{17}{4}} }{1-z}v_n(q^{-3}z). \nonumber 
\end{eqnarray}
We will now apply the same trick as was used in Section \ref{Sect 3} to deduce $P_{n,n}(0)$. This time however, we will be approaching $z\sim q^{2n}$ from $z\sim 1$ and not the other way around. Re-writing Equation \eqref{wn diff} as a vector $q$-difference equation we find,
\begin{multline}\label{wn matrix equation}
   \begin{bmatrix}
   v_n(z)\\
   v_n(q^{-1}z)\\
   v_n(q^{-2}z)
 \end{bmatrix}= \Bigg(\begin{bmatrix}
   0 & \frac{1}{1-z} &0\\
   1&0&0 \\
   0&1&0
   \end{bmatrix}
   -\frac{z^{1/2}q^{(\frac{\beta+\alpha}{2}-\frac{1}{4}) }}{1-z} \begin{bmatrix}
   (q^{-\alpha}+q^{-\beta}) & 0 & 0\\
   0&0&0 \\
   0&0&0
   \end{bmatrix}  \\
   +\frac{\left(q^{2n}z^{-1}\right)}{1-z}^{\frac{1}{2}}\begin{bmatrix}
   q^{\frac{\alpha+\beta}{2}+\frac{3}{4}}(1+q^{-\alpha}+q^{-\beta})q^2 & -q^2(1+q^{\alpha} + q^{\beta})\left(q^{2n}z^{-1}\right)^{\frac{1}{2}} &q^{\frac{\alpha+\beta}{2}+\frac{17}{4}}\left(q^{2n}z^{-1}\right)\\
   0&0&0 \\
   0&0&0
   \end{bmatrix}
 \Bigg)
 \begin{bmatrix}
   v_n(q^{-1}z)\\
   v_n(q^{-2}z)\\
   v_n(q^{-3}z)
 \end{bmatrix}.
 \end{multline}
Using Equation \eqref{wn matrix equation} we can write $V_n(q^jz) = [v_n(q^{j}z), v_n(q^{j-1}z),v_n(q^{j-2}z)]^T$ in terms of \\ $V_n(z) = [v_n(q^{-1}z), v_n(q^{-2}z),v_n(q^{-3}z)]^T$ for any $j$. Applying the same arguments presented in the proof of Lemma \ref{Section 3 lemma} it follows that $v_n(q^{2n}z) = \Lambda_{n}(z)v_n(z)$ for some function $\Lambda_{n}(z)$, where the entries of $\Lambda_n(z)$ are $\mathcal{O}(1)$. One has to be careful of two potential issues however. For $z>q^n$, the second most dominant term is $\mathcal{O}(z^{1/2})$ (after $\mathcal{O}(1)$ in the first matrix). In this case only the first and second entry of the vector contribute to its change. One can readily verify that in this reduced case the eigenvalues, $\lambda_{1,2}$, are bounded above and below by $\lambda_{1,2} = 1\pm kz^{1/2}$ for some bounded constant $k$ independent of $n$. As a result we do not have to be concerned with the potential that $v(q^iz)\ll v(z)$. For $z<q^n$ the second most dominant term is $\mathcal{O}(q^nz^{-1/2})$. Thus, when calculating $v(q^{2n}z)$ one should be calculate one product from some $z_{c,1}$ near $z$, and one from another $z_{c,2}$ near $zq^{2n}$, and there will be a vanishing error away from these points. We conclude that $\Lambda_{n}(z)$ approaches a non-zero limiting function $\Lambda_{\infty}(z)$ as $n\to\infty$. Furthermore, by considering the power series representation of $v_n(z)$ we can deduce that $v_n(z)$ also approaches a limiting function $v_\infty(z)$ as $z\to\infty$. \\  

Thus, we find that
\begin{eqnarray}
    v_n(q^{2n}z) &=& \Lambda_{n}(z)v_n(z)\nonumber \\
    \frac{u_n(q^{2n}z)}{f(q^{2n}z)} &=& \Lambda_{n}(z)\frac{u_n(z)}{f(z)} \nonumber \\
    \frac{y_n(q^{2n}z)\left( q^{2n^2}z^n\right)}{f(q^{2n}z)} &=& \Lambda_{n}(z)\frac{ u_n(z)}{f(z)} \nonumber \\
    \frac{y_n(q^{2n}z)\left( q^{2n^2}z^n\right)f(z)}{f(q^{2n}z)} &=& \Lambda_{n}(z)u_n(z)\nonumber \\
    u_n(z) &=& \frac{y_n(q^{2n}z)\left( q^{n(n+1+\beta-\alpha)}\right)}{\Lambda_{n}(z)} \label{step 2}
\end{eqnarray}
Recall from Lemma \ref{Section 3 lemma} that
\[ \lim_{n\to\infty} q^{-n(2n-1)}P_{n,n}(q^{2n}z) = F_1(z),\]
and furthermore, $P_{n,n}(q^{2n-k})$ is rapidly vanishing as $k\to\infty$. Hence, it follows that 
\begin{equation}\label{step 1}
\lim_{n\to\infty}y_n(q^{2n}z)\gamma_1^{(n,n)}q^{-n(2n-1)}q^{-2n\alpha} = \sum_{k= -\infty}^\infty \frac{F_1(z)q^{k(1+\alpha)}}{z-q^k}:=G_1(z).
\end{equation}
Combining Equations \eqref{step 2} and \eqref{step 1} we find
\begin{eqnarray}\label{C1 def}
    \lim_{n\to\infty} q^{-n(3n+\beta+\alpha)}u_n(z)\gamma_1^{(n,n)} &=& \frac{G_1(z)}{\Lambda_{\infty}(z)} .\nonumber \\
    \lim_{n\to\infty} q^{-n(3n+\beta+\alpha)}\gamma_1^{(n,n)} &=& \frac{G_1(z)}{\Lambda_{\infty}(z)u_\infty(z)} .
\end{eqnarray} 

The results of this section are summarised in the lemma below. Although we only discussed the behaviour of $\gamma_1^{(n,n)}$ in this section, repeating the presented arguments yields similar insights for $\gamma_1^{(n-1,n)}$ and $\gamma_2^{(n,n-1)}$.

\begin{lemma}\label{Section 4 lemma}
    The norms, $\gamma_1^{(n,n)}$, $\gamma_1^{(n-1,n)}$ and $\gamma_2^{(n,n-1)}$, defined in Equation \eqref{gamma def} display the following asymptotic behaviour as $n\to\infty$.
    \begin{enumerate}
    \item $\lim_{n\to\infty}q^{-n(3n+\alpha+\beta)}\gamma_1^{(n,n)} = C_1$, where $C_1$ is a non-zero constant which can be readily deduced from Equation \eqref{C1 def}.
    \item $\lim_{n\to\infty}q^{-n(3n+\alpha+\beta-3)}\gamma_1^{(n-1,n)} = C_2$, where $C_2$ is a non-zero constant which can be readily deduced using similar arguments to those that arrived at Equation \eqref{C1 def}.
    \item $\lim_{n\to\infty}q^{-n(3n+\alpha+\beta-3)}\gamma_2^{(n,n-1)} = C_3$, where $C_3$ is a non-zero constant which can also be readily deduced using similar arguments to those that arrived at Equation \eqref{C1 def}.
    \end{enumerate}
\end{lemma}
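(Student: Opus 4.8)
The plan is to treat part~(1) as essentially already established in the body of Section~\ref{Sect 4} and to obtain parts~(2) and~(3) by rerunning that same argument with a different row of $Y_n$ in place of the $(1,2)$ entry. For part~(1), the chain of identities terminating at Equation~\eqref{C1 def} shows that $q^{-n(3n+\alpha+\beta)}\gamma_1^{(n,n)}$ converges to $G_1(z)/\bigl(\Lambda_\infty(z)u_\infty(z)\bigr)$, and two short remarks finish it: since the left-hand side does not depend on $z$, the limit is a genuine constant $C_1$; and $C_1\neq 0$ because $u_\infty$ is the non-trivial limiting power series of Section~\ref{Sect 3} (with value $1/\Omega^{(1)}_\infty\neq 0$ at the origin), $\Lambda_\infty$ is non-zero by the contraction estimate of Remark~\ref{remark 4.1}, and $G_1$ is a non-trivial sum of simple fractions whose convergence is precisely what the rapid-vanishing statement in point~(2) of Lemma~\ref{Section 3 lemma} secures. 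So for~(1) one only records these observations.

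For parts~(2) and~(3), to extract $\gamma_1^{(n-1,n)}$ I would study the $(2,2)$ entry
\[
Y_{2,2}(z)=\frac{1}{\gamma_1^{(n-1,n)}}\oint_\Gamma\frac{P_{n-1,n}(s)\,w(s)h_\alpha(s)}{z-s}\,ds ,
\]
which, on eliminating $Y_{1,2}$ and $Y_{3,2}$ from the second column of Equation~\eqref{the main diff} with the explicit $D_n$ of Lemma~\ref{coeff determination}, satisfies a scalar third-order $q$-difference equation of the same shape as Equation~\eqref{vn diff}; its normalisation at infinity is $Y_{2,2}(z)\sim z^{-n}$, read off from the decay condition of RHP~I, equivalently from the orthogonality of $P_{n-1,n}$ whose first non-vanishing moment against $w_1$ is $\gamma_1^{(n-1,n)}$. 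One then chooses the substitution making $u_n(z)=z^{\,n-1}Y_{2,2}(z)$ of order $z^{-1}$ at infinity, divides by the leading-order function $f'$ solving the relevant analogue of Equation~\eqref{leading diff}, writes out the vector $q$-difference equation, and runs the matrix-Pochhammer contraction of Remark~\ref{remark 4.1} with the same two-regime ($|z|>q^n$ versus $|z|<q^n$) care as in Section~\ref{Sect 4}, obtaining $v_n(q^{2n}z)=\Lambda_n(z)v_n(z)$ with $\Lambda_n$ tending to a non-zero limiting function. On the other side, substituting $k\mapsto 2n+m$ in $\gamma_1^{(n-1,n)}Y_{2,2}(q^{2n}z)=\sum_{k\ge 0}P_{n-1,n}(q^k)w_1(q^k)q^k/(q^{2n}z-q^k)$ and invoking point~(5) of Lemma~\ref{Section 3 lemma} for the pointwise limit of $q^{n(3-2n)}P_{n-1,n}(q^{2n}z)$, together with point~(6) to justify interchanging the limit with the sum, yields the analogue of Equation~\eqref{step 1}; combining the two as in the passage from Equation~\eqref{step 2} to Equation~\eqref{C1 def} identifies $C_2$ as a ratio of power-series-expressible limiting functions. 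The constant $C_3$ is obtained identically from the $(3,3)$ entry
\[
Y_{3,3}(z)=\frac{1}{\gamma_2^{(n,n-1)}}\oint_\Gamma\frac{P_{n,n-1}(s)\,w(s)h_\beta(s)}{z-s}\,ds ,
\]
the third column of Equation~\eqref{the main diff} (scalar $q^\beta$), and points~(3) and~(4) of Lemma~\ref{Section 3 lemma}. The change of exponent relative to part~(1) is only a consequence of the polynomial here carrying the normalisation $q^{n(3-2n)}$ rather than $q^{n(1-2n)}$ and decaying like $z^{-n}$ rather than $z^{-n-1}$; tracing these through the scaling of $f'$ turns $n(3n+\alpha+\beta)$ into $n(3n+\alpha+\beta-3)$.

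The main obstacle is bookkeeping, on two fronts. First, one must verify that eliminating the other two entries from the second (respectively third) column of Equation~\eqref{the main diff} produces a scalar equation whose leading behaviour near $z=0$ and near $z=\infty$ matches Equation~\eqref{vn diff} closely enough that the delicate part of Section~\ref{Sect 4} — the two-regime contraction and the existence and non-vanishing of the limiting $\Lambda_\infty$ — carries over unchanged; this is a finite computation with the explicit $D_n$, but it is the place where something could genuinely go wrong. Second, one must keep every power of $q$ straight along the chain \eqref{step 2}--\eqref{C1 def}, since an error there is invisible in the limiting-function formulation yet alters the stated exponent. The non-vanishing of $C_2$ and $C_3$ then follows exactly as for $C_1$, from the entirety and non-triviality of $F_1$ and $F_2$ and the decay estimates of Lemma~\ref{Section 3 lemma}.
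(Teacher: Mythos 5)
Your proposal is correct and follows essentially the same route as the paper: part (1) is exactly the chain of Section \ref{Sect 4} ending in Equation \eqref{C1 def}, and the paper itself obtains parts (2) and (3) only by remarking that one repeats those arguments, which is precisely what you do — using the $(2,2)$ and $(3,3)$ entries, the second and third columns of Equation \eqref{the main diff}, the normalisations $z^{-n}$ at infinity, and points (3)--(6) of Lemma \ref{Section 3 lemma}, with the exponent shift $n(3n+\alpha+\beta)\to n(3n+\alpha+\beta-3)$ traced to the $q^{n(3-2n)}$ normalisation and the altered decay. Your explicit bookkeeping of which entry, which column scalar ($q^\alpha$ versus $q^\beta$), and which limiting statements of Lemma \ref{Section 3 lemma} are invoked is in fact more detailed than the paper's own treatment of (2) and (3).
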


\begin{remark}
    To give the reader a reference for the magnitude of $\Lambda_n(z)$ and its rate of convergence, we calculated the matrix $M_n(z) = \prod_{j=0}^{2n} T_n(q^jz)$, where $T_n(z)$ is given by the relation in Equation \eqref{wn matrix equation}:
\begin{equation}\nonumber 
   \begin{bmatrix}
   v_n(z)\\
   v_n(q^{-1}z)\\
   v_n(q^{-2}z)
 \end{bmatrix}= T_n(z)
 \begin{bmatrix}
   v_n(q^{-1}z)\\
   v_n(q^{-2}z)\\
   v_n(q^{-3}z)
 \end{bmatrix},
 \end{equation}
 for different values of $n$ at $z=1/2$, and $q=0.7$, $\alpha= 0.4$, $\beta = 0.6.$
\end{remark}

\begin{eqnarray*}
    M_{n=10}(1/2) &=& \begin{bmatrix}
   0.7880&0.4433&\mathcal{O}(q^{3n})\\
   0.9234&-0.4799&\mathcal{O}(q^{3n})\\
   -0.0669&0.0881&\mathcal{O}(q^{3n})
 \end{bmatrix},\\
 M_{n=20}(1/2) &=& \begin{bmatrix}
   0.8770&-0.4796&\mathcal{O}(q^{3n})\\
   0.8127&-0.4039&\mathcal{O}(q^{3n})\\
  -0.1871&0.1579&\mathcal{O}(q^{3n})
 \end{bmatrix},\\
 M_{n=40}(1/2) &=& \begin{bmatrix}
   0.8790&-0.4803&\mathcal{O}(q^{3n})\\
   0.8094&-0.40316&\mathcal{O}(q^{3n})\\
  -0.1903&0.1598&\mathcal{O}(q^{3n})
 \end{bmatrix}.
\end{eqnarray*} 

\section{Universal results}\label{Uni section}

Having comprehensively described the asymptotics of multiple little $q$-Jacobi polynomials with weights $w_1(x) = x^\alpha (qx;q)_\infty$ and $w_2(x) = x^\beta (qx;q)_\infty$ we look to extend our results to a wider class of $q$-orthogonal polynomials. In particular, we will study the asymptotic behaviour of multiple $q$-orthogonal polynomials which are orthogonal with respect to weights of the form
\begin{equation}\label{comparable weights}
    w_3(x) = x^\alpha\omega(x),\qquad w_4(x) = x^\beta\omega(x),
\end{equation}
where the function $\omega(x)$ satisfies the constraint given by Equation \eqref{the weight constraint}. We will show that similar to the case of $q$-orthogonal polynomials with only one measure, studied in \cite{NJTLconst}, we obtain universal asymptotic results independent of the function $\omega(x)$. In order to achieve this goal we will make a series of transformations to RHP I. First we deform the jump contour $\Gamma$ to $\Gamma_{q^{2n}}$ and write an analogues RHP (RHP II) but with an accompanying residue condition. The solution to RHP II is equivalent to the solution of RHP I.
A $3\times3$ matrix function $Y_n(z)$ is a solution to RHP II if it satisfies the following conditions:
\begin{enumerate}[label={{\rm (\roman *)}}]
\begin{subequations}
\item $Y_n(z)$ is analytic on $\mathbb{C}\setminus \left( \Gamma_{q^{2n}}\cup \{ q^{k}\}_{k=0}^{2n-1}\right)$.

\item $Y_n(z)$ has continuous boundary values $Y_{n,-}(s)$ and $Y_{n,+}(s)$ as $z$ approaches $s\in\Gamma_{q^{2n}}$ from $\mathcal D_-$ and $\mathcal D_+$ respectively, where
\begin{gather} 
Y_{n,+}(s) =
Y_{n,-}(s)
\begin{pmatrix}
1 &
w(s)h_\alpha(s) & w(s)h_\beta(s) \\
0 & 1 & 0 \\
0 & 0 & 1 
\end{pmatrix}, \; s\in \Gamma_{q^{2n}} ,
\end{gather}
where $w(s) = (sq;q)_\infty$ and $h_\alpha(s)$ and $h_\beta(s)$ are as discussed in Section \ref{Prelim sect}.

\item $Y_{n}(z)$ satisfies the residue condition
\begin{gather} 
\mathrm{Res}(Y_{n}(q^k))
 =
  \lim_{z\to q^k} Y_{n}(z)
  \begin{bmatrix}
   0 &
   w_1(q^k)q^k &
   w_2(q^k)q^k \\
   0 &
   0 &
   0\\
   0&0&0
   \end{bmatrix},
\end{gather}
for $0\leq k \leq 2n-1$, where $w_1(x)$ and $w_2(x)$ are the weights corresponding to multiple little $q$-Jacobi polynomials of the first kind (see Equation \eqref{the weights}). 

\item $Y_n(z)$ satisfies the asymptotic condition
\begin{gather} 
Y_n(z)\begin{pmatrix}
z^{-2n} & 0 & 0\\
0 & z^{n} & 0 \\
0 & 0 &z^{n}
\end{pmatrix}
=
I + \mathcal{O}\left( \frac{1}{z} \right), \text{ as }\ |z| \rightarrow \infty.
\end{gather}

\end{subequations}
\end{enumerate}

We will now transform RHP II to a form more conducive to studying the asymptotic behaviour of $Y_n(z)$. Define the functions
\begin{eqnarray*}
 a(z) &=& (z^{-1};q)_\infty ,\\
 b(z) &=& (z^{-1};q^2)_\infty ,\\
 c(z) &=& (z^{-1}q;q^2)_\infty .
\end{eqnarray*}
Note that $a(z)=b(z)c(z)$. One can show by direct calculation that they satisfy the $q$-difference equations
\begin{eqnarray*}
 a(qz) &=& (1-z^{-1}q^{-1})a(z) ,\\
 b(qz) &=& (1-z^{-1}q^{-1})c(z) ,\\
 c(qz) &=& b(z).
\end{eqnarray*}
By induction the above difference equations give us:
\begin{subequations}
   \begin{alignat}{2}\label{induct diff}
    a(q^{2n}z) &= q^{-n(2n+1)}z^{-2n}(qz;q)_{2n}a(z)&&:= q^{-n(2n+1)}z^{-2n}A_n(z),\\
    b(q^{2n}z) &= (-1)^nq^{-n(n+1)}z^{-n}(zq^2;q^2)_{n-1}b(z) &&:= (-1)^nq^{-n(n+1)}z^{-n} B_n(z),\\
    c(q^{2n}z) &= (-1)^nq^{-n^2}z^{-n}(zq;q^2)_{n-1}c(z)&&:= (-1)^nq^{-n^2}z^{-n}C_n(z).
\end{alignat} 
\end{subequations}

We will make the transformation
\begin{gather} \label{U transform}
\tilde{U}_n(z)=
\begin{cases}
Y_{n}(z)
\begin{bmatrix}
z^{-2n}a(z)^{-1} & 0 &0 \\
0 & z^{n+\frac{1}{2}}b(z) &0\\
0&0&z^nc(z) 
\end{bmatrix},\qquad \text{for}\,z\in \text{ext}(\Gamma_{q^{2n}}),\\
Y_{n}(z), \qquad  \text{for}\,z \in \text{int}(\Gamma_{q^{2n}}).
\end{cases}
\end{gather}
After this transformation we arrive at a corresponding RHP for $\tilde{U}_n(z)$. However, we will in fact describe the RHP for $U_n(z) = \tilde{U}_n(q^{2n}z)$. To remind the reader that we are making this change of variable we will write the RHP in terms of $t$ as the complex independent variable instead of $z$. We will also make the change
\begin{subequations}\label{weight tilde scaling}
    \begin{eqnarray}
        \tilde{w}_1(z) &=& q^{-2n\alpha}w_1(z),\\
        \tilde{w}_2(z) &=& q^{-2n\alpha}w_2(z).
    \end{eqnarray}
\end{subequations}

$U_n(t)$ is a solution to RHP III if it satisfies the following conditions
\begin{enumerate}[label={{\rm (\roman *)}}]
\begin{subequations}
\item $U_n(t)$ is analytic on $\mathbb{C}\setminus \left( \Gamma_{q}\cup (-\infty,-I)\cup \{ q^{-k}\}_{k=1}^{2n}\right)$, where $-I$ is the minimum point where $\Gamma$ intersects the negative real axis.

\item $U_n(t)$ has continuous boundary values $U_{n,-}(s)$ and $U_{n,+}(s)$ as $t$ approaches $s\in\Gamma$ from $\mathcal D_-$ and $\mathcal D_+$ respectively, and satisfies the jump condition $U_{n,+}(s) = U_{n,-}(s)J_n(s)$ where
\begin{gather} \label{Ut transform}
J_n(s) = 
\begin{bmatrix}
q^{n(1-2n)}A_n(s)^{-1} & q^{n^2}(-1)^ns^\frac{1}{2}B_n(s)w(sq^{2n})h^\alpha(s) &q^{n^2}(-1)^nC_n(s)w(sq^{2n})h^\beta(s) \\
0 & q^{n^2}(-1)^ns^\frac{1}{2}B_n(s) &0\\
0&0&q^{n^2}(-1)^nC_n(s)
\end{bmatrix},
\end{gather}
$w(s) = (sq;q)_\infty$ and $h_\alpha(s)$ and $h_\beta(s)$ are as discussed in Section \ref{Prelim sect}.

\item $U_n(t)$ has continuous boundary values $U_{n,-}(s)$ and $U_{n,+}(s)$ as $t$ approaches $s\in(-\infty,-I)$ from $x+i\epsilon$ and $x-i\epsilon$ respectively, and satisfies the jump condition $U_{n,+}(s) = U_{n,-}(s)M_n(s)$ where
\begin{gather} 
M_n(s) = 
\begin{bmatrix}
1&0&0 \\
0 & -1 &0\\
0&0&1
\end{bmatrix}
\end{gather}
(note that this jump condition is necessary to remedy the multiplication of the second column by $t^{1/2}$, it is not of importance to the rest of our argument).

\item $U_{n}(t)$ satisfies the residue condition
\begin{equation} 
 \mathrm{Res}_{q^{-2k},\,1\leq k\leq n}U(t)
 = 
  \lim_{t\to q^{-2k}}(t- q^{-2k}) U(t)
  \begin{bmatrix}
   0 &
   0 &
   0 \\
   \frac{q^{n(n-1)}}{t^\frac{1}{2}B_n(t)A_n(t)w(tq^{2n})h^\alpha(t)} &
   0 &
   \frac{C_n(t)h^\beta(t)}{t^\frac{1}{2}B_n(t)h^\alpha(t)}\\
   0& 0 &0
   \end{bmatrix},
\end{equation}
\begin{equation}
\mathrm{Res}_{q^{-2k-1},\,0\leq k<n}U(t) = 
 \lim_{t\to q^{-2k-1}}(t- q^{-2k-1}) U(t)
  \begin{bmatrix}
   0 &
   0 &
   0 \\
   0 &
   0 &
   0\\
   \frac{q^{n(n-1)}}{C_n(t)A_n(t))w(tq^{2n})h^\beta(t)} & \frac{t^\frac{1}{2}B_n(t)h^\alpha(t)}{C_n(t)h_\beta(t)} &0
   \end{bmatrix}.
\end{equation}

\item $U_n(t)$ satisfies the asymptotic condition
\begin{gather} 
U_n(z)\begin{pmatrix}
1 & 0 & 0\\
0 & q^{-n}t^{-\frac{1}{2}} & 0 \\
0 & 0 &1
\end{pmatrix}
=
I + \mathcal{O}\left( \frac{1}{t} \right), \text{ as }\ |t| \rightarrow \infty.
\end{gather}
    
\end{subequations}
\end{enumerate}

We need to make one more transformation before we start comparing solutions with different weights to the original multiple little $q$-Jacobi polynomials studied in Sections \ref{Sect 3} and \ref{Sect 4}. Define the matrices
\[ \Phi_n = \begin{bmatrix}
q^{n(1-2n)} & 0 &0 \\
0 & (-1)^nq^{n^2} &0\\
0&0&(-1)^nq^{n^2}
\end{bmatrix} ,\]
and,
\[ \Psi_n = \begin{bmatrix}
1 & 0 &0 \\
0 & q^{n(\beta-\alpha)} &0\\
0&0&q^{n(\alpha-\beta)}
\end{bmatrix} .\]
We make the scalar transformation
\begin{gather} \label{W transform}
W_n(t)=
\begin{cases}
\Psi_n\Phi_n
U_{n}(z)
\Phi_n^{-1},\qquad \text{for}\,z\in \text{ext}(\Gamma_{q^{2n}}),\\
\Psi_n\Phi_nU_n(t), \qquad  \text{for}\,z \in \text{int}(\Gamma_{q^{2n}}).
\end{cases}
\end{gather}

As a result $W_n(t)$ satisfies RHP IV:
\begin{enumerate}[label={{\rm (\roman *)}}]
\begin{subequations}
\item $W_n(t)$ is analytic on $\mathbb{C}\setminus \left( \Gamma_{q}\cup (-\infty,-I)\cup \{ q^{-k}\}_{k=1}^{2n}\right)$, where $-I$ is the minimum point where $\Gamma$ intersects the negative real axis.

\item $W_n(t)$ has continuous boundary values $W_{n,-}(s)$ and $W_{n,+}(s)$ as $t$ approaches $s\in\Gamma$ from $\mathcal D_-$ and $\mathcal D_+$ respectively, and satisfies the jump condition $W_{n,+}(s) = W_{n,-}(s)J_n(s)$ where
\begin{gather} \label{jump to ref}
J_n(s) = 
\begin{bmatrix}
A_n(s)^{-1} & s^\frac{1}{2}B_n(s)w(sq^{2n})h^\alpha(s) &C_n(s)w(sq^{2n})h^\beta(s) \\
0 & s^\frac{1}{2}B_n(s) &0\\
0&0&C_n(s)
\end{bmatrix},
\end{gather}
$w(s) = (sq;q)_\infty$ and $h_\alpha(s)$ and $h_\beta(s)$ are as discussed in Section \ref{Prelim sect}.

\item $W_n(t)$ has continuous boundary values $W_{n,-}(s)$ and $W_{n,+}(s)$ as $t$ approaches $s\in(-\infty,-I)$ from $x+i\epsilon$ and $x-i\epsilon$ respectively, and satisfies the jump condition $W_{n,+}(s) = W_{n,-}(s)M_n(s)$ where
\begin{gather} 
M_n(s) = 
\begin{bmatrix}
1&0&0 \\
0 & -1 &0\\
0&0&1
\end{bmatrix}
\end{gather}
(again, note that this jump condition is necessary to remedy the multiplication of the second column by $t^{1/2}$, it is not of importance to the rest of our argument).

\item $W_{n}(t)$ satisfies the residue condition
\begin{equation} 
 \mathrm{Res}_{q^{-2k},\,0<k\leq n}W_n(t)
 = 
  \lim_{t\to q^{-2k}}(t- q^{-2k}) W_n(t)
  \begin{bmatrix}
   0 &
   0 &
   0 \\
   \frac{1}{t^\frac{1}{2}B_n(t)A_n(t)w(tq^{2n})h^\alpha(t)} &
   0 &
   \frac{C_n(t)h^\beta(t)}{t^\frac{1}{2}B_n(t)h^\alpha(t)}\\
   0& 0 &0
   \end{bmatrix},
\end{equation}
\begin{equation}
\mathrm{Res}_{q^{-2k-1},\,0\leq k<n}W_n(t) = 
 \lim_{t\to q^{-2k-1}}(t- q^{-2k-1}) W_n(t)
  \begin{bmatrix}
   0 &
   0 &
   0 \\
   0 &
   0 &
   0\\
   \frac{1}{C_n(t)A_n(t))w(tq^{2n})h^\beta(t)} & \frac{t^\frac{1}{2}B_n(t)h^\alpha(t)}{C_n(t)h_\beta(t)} &0
   \end{bmatrix}.
\end{equation}

\item $W_n(t)$ satisfies the asymptotic condition
\begin{gather} 
W_n(t)\begin{pmatrix}
1 & 0 & 0\\
0 & q^{-n(1+\beta-\alpha)}t^{-\frac{1}{2}} & 0 \\
0 & 0 &q^{-n(\alpha-\beta)}
\end{pmatrix}
=
I + \mathcal{O}\left( \frac{1}{t} \right), \text{ as }\ |t| \rightarrow \infty.
\end{gather}
    
\end{subequations}
\end{enumerate}
Having made all these transformations and arrived at $W_n(t)$ we are now in position to prove the main result of this section. Note that after these transformations, $\text{det}(W_n(t)) = q^nt^{1/2}$. On the other hand, from Equation \eqref{weight tilde scaling} one immediately concludes that $q^{2n\alpha}\tilde{\gamma}_1^{(n-1,n)} = \gamma_1^{(n-1,n)}$ and $q^{2n\beta}\tilde{\gamma}_2^{(n,n-1)} = \gamma_2^{(n,n-1)}$, combining this with Lemmas \ref{Section 3 lemma} and \ref{Section 4 lemma} we find that in the limit $n\to\infty$:
\begin{eqnarray*}
    q^{n(1-2n)}P_{n,n}(tq^{2n}) &=& F_1(t),\\
 \frac{q^{n^2}q^{n(-\alpha+\beta)}}{\tilde{\gamma}_1^{(n-1,n)}} P_{n-1,n}(tq^{2n}) &=&  C_2 F_2(t),\\
    \frac{q^{n^2}q^{n(\alpha-\beta)}}{\tilde{\gamma}_2^{(n,n-1)}} P_{n,n-1}(tq^{2n}) &=&  C_3 F_2(t).
\end{eqnarray*}
Hence, although the determinant of $W_n(t)$ is of order $\mathcal{O}(q^{n})$ for finite $t\neq 0$ we have the benefit that each entry of $W_n(t)$ is of order $\mathcal{O}(1)$.\\

Suppose we have a solution to another RHP of the form of RHP I, but this time with weights which satisfy the condition given by Equation \eqref{comparable weights}. Let us call this solution $\mathcal{Y}_n(z)$.  After we perform the same transformations as we did to arrive at $W_n(t)$, this time to $\mathcal{Y}_n(z)$, we will in turn arrive at the corresponding RHP and accompanying matrix solution $\mathcal{W}_n(t)$. The RHP will be identical to RHP IV with the exception that $w(tq^{2n})$ will be replaced by the alternative weight, $\omega(tq^{2n})$ corresponding to $\mathcal{W}_n(t) $.\\

Let us modify $W_n(t)$ by altering the first column for $t\in \text{ext}(\Gamma)$, as follows,
\begin{equation}
    \widehat{W}_n(t) = W_n(t) + \sum_{k=1}^{2n} \frac{\omega(q^{2n-k})-w(q^{2n-k})}{t-q^{-k}} \text{Res}_{t=q^{-k}} \left( \begin{bmatrix}
q^{n(1-2n)}P_{n,n}(tq^{2n})\\
\frac{q^{n^2}q^{n(-\alpha+\beta)}}{\tilde{\gamma}_1^{(n-1,n)}} P_{n-1,n}(tq^{2n})\\
 \frac{q^{n^2}q^{n(\alpha-\beta)}}{\tilde{\gamma}_2^{(n,n-1)}} P_{n-1,n}(tq^{2n})
\end{bmatrix}/A_n(t)\right),
\end{equation}
where $A_n(t)$ is defined by Equation \eqref{induct diff}. Note that $P_{n,m}(tq^{2n})$ is an entire function and $A_n(t)$ has zeros at $t=q^k$ for $k>q^{-2n}$. We only consider the residues for $t=q^{-k}$ where $k\geq 1$, as we are interested in $t\in \text{ext}(\Gamma)$. By assumption $|\omega(q^{2n})-w(q^{2n})|=\mathcal{O}(q^{2n})$. Thus, combined with the vanishing residue of $P_{n\pm 1,n\pm 1}(t)$ detailed in Lemma \ref{Section 3 lemma}, we conclude that the difference between $W_n(t)$ and $\widehat{W}_n(t)$ will be of order $\mathcal{O}(q^{2n})$. Now let us consider 
\[ R(t) = \mathcal{W}_n(t)\widehat{W}_n(t)^{-1} .\]
$R(t)$ is a solution to the following RHP
\begin{enumerate}[label={{\rm (\roman *)}}]
\begin{subequations}
\item $R(t)$ is analytic on $\mathbb{C}\setminus  \Gamma_{q}$.

\item $R(t)$ has continuous boundary values $R_{n,-}(s)$ and $R_{n,+}(s)$ as $t$ approaches $s\in\Gamma$ from $\mathcal D_-$ and $\mathcal D_+$ respectively, and satisfies the jump condition $R_{n,+}(s) = R_{n,-}(s)J_n(s)$ where
\begin{eqnarray*} 
J_n(s)&=& (\mathcal{W}_{n,-}\widehat{W}_{n,-}^{-1})^{-1}(\mathcal{W}_{n,+}\widehat{W}_{n,+}^{-1}),\\
&=& \widehat{W}_{n,-}
(\mathcal{W}_{n,-}^{-1}\mathcal{W}_{n,+})\widehat{W}_{n,+}^{-1},\\
&=& \widehat{W}_{n,-}
(1+\mathcal{O}(q^{2n}))\widehat{W}_{n,-}^{-1},\\
&=&W_{n,-}\left( I + \mathcal{O}(q^{2n})\right) (W_{n,-} + \mathcal{O}(q^{2n}))^{-1}, \\
&=& I + \mathcal{O}(q^{n}).
\end{eqnarray*}
We have used Equation \eqref{jump to ref} to arrive at the third equality and the fact that $\det(W_{n,-})=\mathcal{O}(q^n)$ to arrive at the last equality. Note that as stated earlier, the elements of $W_n(t)$ are $\mathcal{O}(1)$ for finite $t$.
\item $R(t)$ satisfies the asymptotic condition
\begin{gather} 
R(t)
=
I + \mathcal{O}\left( \frac{1}{t} \right), \text{ as }\ |t| \rightarrow \infty.
\end{gather}
    
\end{subequations}
\end{enumerate}

As the jump is of size $I + \mathcal{O}(q^n)$ we can apply standard RHP analysis (see \cite{kuijlaars2003riemann},\cite{deift1993steepest}) to immediately conclude that $|R(t)-I| = \mathcal{O}(q^{n})$. Thus,
\[ \mathcal{W}_n(t) = W_n(t) + \mathcal{O}(q^{n}), \quad \text{as}\,n\to\infty.\]
This identity leads us to the following theorem which describes the asymptotic results found in this section.

\begin{theorem}\label{Section 5 lemma}
    Given a set of multiple $q$-orthogonal polynomials, $\{P_{n,m}(z)\}_{n,m=0}^\infty$, with weights given by Equation \eqref{the general weights}, where $\omega(x)$ satisfies Equation \eqref{the weight constraint}, both Lemmas \ref{Section 3 lemma} and \ref{Section 4 lemma} hold with the same limiting functions, $F_{i}(t)$, and constants, $C_i$.
\end{theorem}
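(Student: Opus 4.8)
\emph{Strategy.} The proof is essentially an assembly of the constructions already carried out in this section, so the plan is to make the bookkeeping explicit. First I would isolate the structural fact that the chain of transformations $Y_n\mapsto \tilde U_n\mapsto U_n\mapsto W_n$ given by Equations \eqref{U transform}, \eqref{weight tilde scaling} and \eqref{W transform}, together with the contour deformation $\Gamma\to\Gamma_{q^{2n}}$ and the factorisations \eqref{induct diff}, is built only from $n$, $\alpha$, $\beta$, $q$ and the explicit functions $a,b,c$; it never uses the weight itself. Hence applying the identical steps to the solution $\mathcal Y_n(z)$ of the RHP attached to the weights \eqref{comparable weights} produces a matrix $\mathcal W_n(t)$ solving RHP IV verbatim, except that $w(tq^{2n})=(tq^{2n+1};q)_\infty$ is replaced everywhere by $\omega(tq^{2n})$ in the jump \eqref{jump to ref}.

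\emph{The core estimate.} Next I would record the bound $\mathcal W_n(t)=W_n(t)+\mathcal O(q^n)$, uniform on compact subsets of $\mathbb C\setminus(\Gamma_q\cup\{q^{-k}\}_{k\ge1})$, which is exactly what the discussion above produces: one replaces $W_n$ by the modified matrix $\widehat W_n$ obtained by adjoining to its first column the residue corrections at $t=q^{-k}$, $1\le k\le 2n$, weighted by $\omega(q^{2n-k})-w(q^{2n-k})$, so that $\widehat W_n$ shares with $\mathcal W_n$ both the jump on $\Gamma_q$ and the residues at the points $q^{-k}$; the weight constraint \eqref{the weight constraint} gives $|\omega(q^{2n-k})-w(q^{2n-k})|=\mathcal O(q^{2n})$ for each fixed $k$, while the rapid decay of $P_{n\pm1,n\pm1}$ near the accumulation point recorded in parts (2), (4), (6) of Lemma \ref{Section 3 lemma} makes the correction sum converge and be $\mathcal O(q^{2n})$, so $\widehat W_n=W_n+\mathcal O(q^{2n})$. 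Then $R(t)=\mathcal W_n(t)\widehat W_n(t)^{-1}$ is analytic off $\Gamma_q$, equals $I+\mathcal O(1/t)$ at infinity, and has jump $I+\mathcal O(q^n)$ on $\Gamma_q$ --- the loss of a power of $q^n$ coming from $\det W_{n,-}=\mathcal O(q^n)$ against entries of size $\mathcal O(1)$ --- so standard small-norm RHP theory (\cite{kuijlaars2003riemann}, \cite{deift1993steepest}) gives $R(t)=I+\mathcal O(q^n)$ uniformly, including its Laurent coefficient at $t=\infty$.

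\emph{Extracting the conclusions.} By the identities displayed just before the statement, the first column of $W_n(t)$ is $q^{n(1-2n)}P_{n,n}(tq^{2n})$, $\frac{q^{n^2}q^{n(\beta-\alpha)}}{\tilde\gamma_1^{(n-1,n)}}P_{n-1,n}(tq^{2n})$, $\frac{q^{n^2}q^{n(\alpha-\beta)}}{\tilde\gamma_2^{(n,n-1)}}P_{n,n-1}(tq^{2n})$, with limits $F_1(t)$, $C_2F_2(t)$, $C_3F_2(t)$ by Lemmas \ref{Section 3 lemma} and \ref{Section 4 lemma}. The same transformations place the general-weight polynomials $\mathcal P_{n,n}$, $\mathcal P_{n-1,n}$, $\mathcal P_{n,n-1}$ and their norms into exactly these slots of $\mathcal W_n(t)$, so the core estimate forces the limits to coincide. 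The $(1,1)$ entry gives $q^{n(1-2n)}\mathcal P_{n,n}(tq^{2n})\to F_1(t)$ at once --- part (1) of Lemma \ref{Section 3 lemma} for the general weights --- and the rapid-decay statements (2), (4), (6) transfer because the governing third-order $q$-difference equation \eqref{1st diff v3}, and hence the vanishing-residue mechanism of Section \ref{Sect 3}, is unchanged to leading order by the perturbation of $\omega$, as reflected in $\mathcal W_n=W_n+\mathcal O(q^n)$. For the norm statements of Lemma \ref{Section 4 lemma}, I would note that $\gamma_1^{(n,n)}$, $\gamma_1^{(n-1,n)}$, $\gamma_2^{(n,n-1)}$ are read off from subleading Laurent coefficients of the Cauchy-transform columns of $Y_n$ at $z=\infty$; tracing these coefficients through the transformations and invoking the control of $R(t)$ near $t=\infty$ from the previous paragraph, the corresponding coefficients of $\mathcal W_n$ and $W_n$ agree up to $\mathcal O(q^n)$, so the general-weight norms inherit the same limits $C_1,C_2,C_3$. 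Substituting these back into the $(2,1)$ and $(3,1)$ entries of $\mathcal W_n$ then yields parts (3) and (5) of Lemma \ref{Section 3 lemma}.

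\emph{Main obstacle.} I expect the substantive difficulty to lie entirely in the uniformity of the core estimate. One must control the sum of $\sim 2n$ residue corrections defining $\widehat W_n$, which is precisely where the decay rates of Lemma \ref{Section 3 lemma}(2),(4),(6) must beat both the number of terms and the growth of $A_n(t)^{-1}$ from \eqref{induct diff}; and one must check that $W_n(t)$, although $\det W_n(t)=q^nt^{1/2}\to 0$, stays invertible with $\mathcal O(q^{-n})$-bounded inverse on the compact sets in play (away from its zeros), so the jump of $R$ is genuinely $I+\mathcal O(q^n)$ and no worse. The remaining points --- uniformity as $t\to 0$ and as $t$ nears $\Gamma_q$, and keeping the gradings $\Phi_n,\Psi_n$ straight when passing to Laurent data at $\infty$ --- should be routine once this is settled.
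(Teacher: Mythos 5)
Your proposal follows essentially the same route as the paper: push the general-weight solution $\mathcal{Y}_n$ through the identical transformations to get $\mathcal{W}_n$, correct $W_n$ by the residue sum weighted by $\omega(q^{2n-k})-w(q^{2n-k})$ to form $\widehat W_n$ (using the constraint \eqref{the weight constraint} and the rapid decay from Lemma \ref{Section 3 lemma}), and then run a small-norm argument on $R=\mathcal{W}_n\widehat W_n^{-1}$ with the jump $I+\mathcal{O}(q^{n})$ after losing a factor from $\det W_{n,-}=\mathcal{O}(q^n)$. Your identification of the main obstacle (uniform control of the $\sim 2n$ residue corrections and of $W_n^{-1}$) and your extraction of the limits from the first column and the Laurent data at infinity are consistent with, and if anything slightly more explicit than, the paper's own treatment.
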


\section{Results Discussion and Future Work}
In the main result of this paper, Theorem \ref{Main theorem}, we obtain  universal asymptotics as $n\to\infty$ for multiple $q$-orthogonal polynomials with weights which satisfy the constraint given by Equation \eqref{the weight constraint}. This result is hinted at in the work of \cite{postelmans2005multiple}. We can compare our results with \cite[Equation (2.7)]{postelmans2005multiple}, which tells us that multiple $q$-orthogonal polynomials with weights $w_1(x) = x^{\alpha_1} (qx;q)_\infty/(q^{\zeta+1}x;q)_\infty $ and $w_2(x) = x^{\alpha_1} (qx;q)_\infty/(q^{\zeta+1}x;q)_\infty $ satisfy the equation:
\begin{equation}\label{rod form}
p_{n,n}(z)(qz;q)_\infty/(q^{\zeta+1}z;q)_\infty = \aleph_n \sum_{k=0}^{\infty}\frac{\left(q^{-\zeta-2n};q\right)_{k}\left(q^{\alpha_1 + n +1};q\right)_{k}\left(q^{\alpha_2 + n +1};q\right)_{k}}{\left(q;q\right)_{k}\left(q^{\alpha_1+1};q\right)_{k}\left(q^{\alpha_2+1};q\right)_{k}}\*(q^{\zeta+1}z)^{k}.
\end{equation}
Writing $p_{n,n}(z)(qz;q)_\infty/(q^{\zeta+1}z;q)_\infty $ as a power series $\sum_{k=0}^\infty c_k z^k$, Equation \eqref{rod form} tells us
\[ c_k (1-q^k)(1-q^{\alpha_1+k})(1-q^{\alpha_2+k})  = c_{k-1}q^{\zeta+1}(1-q^{k-2n-\zeta-1})(1-q^{\alpha_1+k+n})(1-q^{\alpha_2+k+n}) .\]
If $y(z)=\sum_{0}^\infty c_kz^k$, then the above coefficient relation implies $y(z)$ satisfies the $q$-difference equation
\begin{eqnarray}\label{general q diff}
y(z)(1-zq^{\zeta+1}) &=& \left[ 1+q^{\alpha_1} + q^{\alpha_2}-qz(q^{-2n}+q^{n+\alpha_1+\zeta+1}+q^{n+\alpha_2+\zeta+1})\right]y(qz) \nonumber \\
&+& \left[-(q^{\alpha_1}+ q^{\alpha_2} + q^{\alpha_1+\alpha_2}) + q^2z(q^{\alpha_1}+q^{\alpha_2}+q^{\alpha_1+\alpha_2+3n+\zeta+1})q^{-n}\right]y(q^2z) \nonumber \\
&+& \left[ q^{\alpha_1+\alpha_2}-q^{3+\alpha_1+\alpha_2}z\right]y(q^3z),\label{rod diff}
\end{eqnarray}
One can observe that for fixed $\zeta$, the right hand side (RHS) of Equation \eqref{rod diff} approaches the same limit as $n\to\infty$ independent of $\zeta$. This seems to indicate that there are universal asymptotic results independent of the weight for weights which are `close enough'. This is exactly what we have just proved in this paper. These results are similar to those found in \cite{NJTLconst} for $q$-orthogonal polynomials with a single weight. However, there is one key difference. In the case of $q$-orthogonal polynomials with a single weight, the only value that mattered in the limiting behaviour of the polynomials as their degree tended to infinity was $\lim_{x\to0}\omega(x)$ (written in the notation of this paper). On the other hand, the proof that $\mathcal{W}_n(t)\to W_n(t)$ as $n\to\infty$ as detailed in this paper, breaks for weights $\omega(x)$ such that $|\omega(q^{2n})-1| > \mathcal{O}(q^n)$. It would be interesting to see if this is indeed the case and $\lim_{x\to0}\omega(x)$ is not the only determining factor for the asymptotic behaviour for multiple $q$-orthogonal polynomials. Equation \eqref{general q diff} indicates that for $\zeta << q^{-2n}$ the behaviour of $P_{n,n}(z)$ near $z=q^{2n}$ is independent of $\zeta$ to leading order. Thus, any difference in asymptotics for $\zeta =  \mathcal{O}(q^{-n}) $ will most likely show in the norms $\gamma^{(n\pm1,n\pm1)}_{1,2}$.

\subsection{Future work}
As mentioned in the introduction, a key reason behind pursuing a better understanding of the asymptotics of multiple $q$-orthogonal polynomials is their application to the $q$-zeta function described in \cite{POSTELMANS2007119}. More work is needed to apply the results presented here to the multiple $q$-orthogonal polynomials used in \cite{POSTELMANS2007119}. Interestingly, this future work involves applying the work of Adams \cite{Adams} and gaining a better understanding of $q$-difference equations with repeating eigenvalues (see \cite{Adams}) and their corresponding RHP. Work in this area would be a major contribution to our understanding of both $q$-difference equations and RHPs.\\

Another possible direction of research is extending the analysis presented here to type II multiple $q$-orthogonal polynomials, $P_{n,m}(z)$, with general $n$ and $m$. As discussed in Remark \ref{could generalise remark} the RHP is readily obtained for such a case, see \cite[Section 5]{BK} and \cite[Chapter 23.6]{ismail2005classical}, however, the asymptotic analysis is more detailed. We believe the techniques presented in this paper will extend to this generalisation, with more complexity to account for the extra degree of freedom for $m$. It is much more unclear how to extend our asymptotic analysis to multiple $q$-orthogonal polynomials of type I, see also \cite[Section 5]{BK}, which represents another direction of future research.

\section*{Disclosure Statement}
The authors report there are no competing interests to declare. Furthermore, there is no funding to declare.

\printbibliography
\end{document}